\definecolor {refcol}{RGB}{40,0,255}
\newfont{\footsc}{cmcsc10 at 8truept}
\newfont{\footbf}{cmbx10 at 8truept}
\newfont{\footrm}{cmr10 at 10truept}
\newtheorem{theorem}{Theorem}
\newtheorem{corollary}[theorem]{Corollary}
\newtheorem{definition}[theorem]{Definition}
\newtheorem{proposition}[theorem]{Proposition}
\newtheorem{question}[theorem]{Question}
\newenvironment{proof}[1][Proof]{\noindent{\textbf {#1}  }}  {\hfill$\Box$\bigskip}
\begin{document}

\title{\textbf{Hypergraphs and hypermatrices with symmetric spectrum}}
\author{V. Nikiforov\thanks{Department of Mathematical Sciences, University of
Memphis, Memphis TN 38152, USA. Email: \textit{vnikifrv@memphis.edu}}}
\date{}
\maketitle

\begin{abstract}
It is well known that a graph is bipartite if and only if the spectrum of its
adjacency matrix is symmetric. In the present paper, this assertion is
dissected into three separate matrix results of wider scope, which are
extended also to hypermatrices. To this end the concept of bipartiteness is
generalized by a new monotone property of cubical hypermatrices, called
odd-colorable matrices.\ It is shown that a nonnegative symmetric $r$-matrix
$A$ has a symmetric spectrum if and only if $r$ is even and $A$ is
odd-colorable. This result also solves a problem of Pearson and Zhang about
hypergraphs with symmetric spectrum and disproves a conjecture of Zhou, Sun,
Wang, and Bu.

Separately, similar results are obtained for the $H$-spectram of
hypermatrices.$\medskip$

\textbf{Keywords: }\textit{hypergraphs; hypermatrices; eigenvalues; }%
$H$-eigenvalues; \textit{symmetric spectrum; odd transversal. }

\textbf{AMS classification: }\textit{05C65,15A69}

\end{abstract}

\section{Introduction}

The purpose of this paper is to extend the following well-known result in
spectral graph theory:

\textbf{Theorem B }\emph{A graph is bipartite if and only if its adjacency
matrix has a symmetric spectrum}$.\smallskip$

Recall that the spectrum of a complex square matrix $A$ is called
\emph{symmetric} if it is the same as the spectrum of $-A$.

Notwithstanding the fame of Theorem B, we find that it is a certain mismatch,
obtained by forcing together several more general statements, with no regard
to their key differences. To clarify this point we shall distill a few
one-sided implications from the mix of Theorem B.

Thus, for any $n\times n$ complex matrix $A=\left[  a_{i,j}\right]  $ and
nonempty sets $I\subset\left[  n\right]  $, $J\subset\left[  n\right]  $,
write $A\left[  I,J\right]  $ for the submatrix of all $a_{i,j}$ with $i\in I$
and $j\in J.$ Now, call $A$ \emph{bipartite} if there is a partition $\left[
n\right]  =U\cup W$ such that $A\left[  U,U\right]  =0$ and $A\left[
W,W\right]  =0$. Clearly, the adjacency matrix of a bipartite graph is
bipartite, but the above definition extends to any square matrix.\medskip

As with graphs, by negating eigenvectors over one of the partition sets, we get:

\begin{proposition}
\label{pro1}If a matrix is bipartite, then its spectrum is symmetric.
\end{proposition}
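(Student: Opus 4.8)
The plan is to exhibit an explicit similarity between $A$ and $-A$, which immediately forces their spectra to coincide as multisets. Writing $[n]=U\cup W$ for the bipartition witnessing that $A$ is bipartite, I would introduce the diagonal signature matrix $D=\mathrm{diag}(d_1,\dots,d_n)$ with $d_i=1$ for $i\in U$ and $d_i=-1$ for $i\in W$. Since $D$ is its own inverse, the whole argument reduces to checking the single identity $DAD=-A$.

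To see this, it is cleanest to group the rows and columns according to the partition and write $A$ in block form. The hypotheses $A[U,U]=0$ and $A[W,W]=0$ say that the two diagonal blocks vanish, so $A$ consists only of the off-diagonal blocks $A[U,W]$ and $A[W,U]$. Conjugating by $D$ scales the $(U,W)$ block by $(+1)(-1)=-1$ and the $(W,U)$ block by $(-1)(+1)=-1$, while leaving the (already zero) diagonal blocks untouched; hence $DAD=-A$. Entrywise this is just $(DAD)_{i,j}=d_i a_{i,j} d_j=-a_{i,j}$, which holds because any nonzero $a_{i,j}$ must join the two parts, so exactly one of $d_i,d_j$ equals $-1$.

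With $DAD=-A$ in hand, $A$ and $-A$ are similar via the invertible matrix $D$, so they share the same characteristic polynomial and therefore the same spectrum, counted with multiplicities. This is precisely the assertion that the spectrum of $A$ is symmetric.

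I expect the only subtlety to be bookkeeping rather than mathematics: the phrase \emph{negating eigenvectors over one partition set} in the text corresponds exactly to the linear map $x\mapsto Dx$, which sends a $\lambda$-eigenvector of $A$ to a $(-\lambda)$-eigenvector. Arguing only at the level of eigenvectors would leave open whether the spectra agree as multisets rather than merely as sets; phrasing everything through the similarity $A\sim -A$, and hence through equality of characteristic polynomials, settles the multiplicity question automatically. So there is no genuine obstacle beyond verifying the one-line identity $DAD=-A$.
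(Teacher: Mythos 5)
Your proof is correct and is essentially the paper's argument: the paper's one-line justification, ``negating eigenvectors over one of the partition sets,'' is exactly the map $\mathbf{x}\mapsto D\mathbf{x}$ induced by your signature matrix $D$. Packaging it as the similarity $DAD=-A$ is a sound (and slightly more complete) way to state the same idea, since it settles equality of the spectra as multisets via the characteristic polynomial rather than only at the level of individual eigenvectors.
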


Clearly, Proposition \ref{pro1} immediately implies half of Theorem B, but is
much more general, and besides has nothing to do with graphs. Aiming at the
other half of Theorem B, note that the existence of a general converse of
Proposition \ref{pro1} is highly unlikely. Indeed, the matrix
\[
H_{2}=\left[
\begin{array}
[c]{rr}%
1 & 1\\
1 & -1
\end{array}
\right]
\]
dashes hopes for a converse of Proposition \ref{pro1} even within the class of
real symmetric matrices; additionally, the Kronecker powers of $H_{2}$ provide
infinitely many examples to the same effect.

Furthermore, letting $I_{n}$ be the identity matrix of order $n,$ we see that
the matrices
\[
\left[
\begin{array}
[c]{cc}%
0 & I_{n}\\
I_{n} & 0
\end{array}
\right]  \text{ \ \ \ and \ \ \ }\left[
\begin{array}
[c]{cc}%
I_{n} & 0\\
0 & -I_{n}%
\end{array}
\right]
\]
are cospectral; yet the first one is bipartite, whereas the second one is not.
Therefore, in general, bipartiteness cannot be inferred from the spectra of
real symmetric matrices. Obviously, pairing Proposition \ref{pro1} with a
converse in the spirit of Theorem B will badly squash its scope, so it is
better left as is. Hints for possible development are given in Proposition
\ref{pro1.1} and Question \ref{qu1} below.

Nonetheless, it is interesting to find for which matrices spectral conditions
may imply bipartiteness. With this goal in mind, we narrow the focus to
nonnegative matrices, and arrive at the following statement:

\begin{theorem}
\label{th1}Let $A$ be an irreducible nonnegative square matrix with spectral
radius $\rho.$ If $-\rho$ is an eigenvalue of $A$, then $A$ is bipartite.
\end{theorem}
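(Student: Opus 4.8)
The plan is to apply the Perron--Frobenius theorem to the irreducible nonnegative matrix $A$ and then convert an equality case of the triangle inequality into the bipartition. I first record the facts I need: the spectral radius $\rho$ is an eigenvalue carrying a strictly positive right eigenvector $x>0$; there is a strictly positive left eigenvector $z$ with $z^{T}A=\rho z^{T}$; and every nonnegative eigenvector of $A$ is a positive multiple of $x$, hence strictly positive. (For order $1$ the statement is checked by hand, so I assume $n\ge2$, whence $\rho>0$.)

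Let $y$ be an eigenvector for $-\rho$, so $Ay=-\rho y$, and let $|y|$ denote the vector of entrywise moduli. For each index $i$,
\[
\rho|y_i| = \bigl|-\rho\, y_i\bigr| = \Bigl|\sum_j a_{ij}y_j\Bigr| \le \sum_j a_{ij}|y_j| = (A|y|)_i,
\]
so $A|y|\ge\rho|y|$ coordinatewise. Pairing with the positive left eigenvector gives $z^{T}(A|y|-\rho|y|)=(z^{T}A-\rho z^{T})|y|=0$; since $z>0$ and the vector $A|y|-\rho|y|$ is nonnegative, it must vanish entirely. Thus $A|y|=\rho|y|$, so $|y|$ is a nonnegative $\rho$-eigenvector and therefore $|y|>0$.

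The crucial step is to read off the phases from the equality $\bigl|\sum_j a_{ij}y_j\bigr|=\sum_j a_{ij}|y_j|$, which now holds for every $i$. Writing $y_j=|y_j|\,\omega_j$ with $|\omega_j|=1$, equality in the triangle inequality for the nonnegative weights $a_{ij}$ forces all $\omega_j$ with $a_{ij}>0$ to coincide, say with $\omega$; then $\sum_j a_{ij}y_j=\omega\,\rho|y_i|$, and comparison with $\sum_j a_{ij}y_j=-\rho y_i=-\rho|y_i|\,\omega_i$ yields $\omega=-\omega_i$ (here $\rho|y_i|>0$ is used). Hence $a_{ij}>0$ implies $\omega_j=-\omega_i$: along every edge of the support digraph of $A$ the phase flips sign. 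Fixing a vertex and invoking the strong connectivity guaranteed by irreducibility, each $\omega_j$ equals $\omega_1$ or $-\omega_1$, so $[n]$ splits as $U\cup W$ with $U=\{j:\omega_j=\omega_1\}$ and $W=\{j:\omega_j=-\omega_1\}$.

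It remains to confirm this is the required partition. If $i,j\in U$ with $a_{ij}>0$, then $\omega_j=-\omega_i=-\omega_1\ne\omega_1=\omega_j$, a contradiction, so $A[U,U]=0$; symmetrically $A[W,W]=0$, and $A$ is bipartite. The main obstacle is the middle passage---upgrading the subinvariance inequality to the equality $A|y|=\rho|y|$ via the left Perron eigenvector, and then squeezing the two-valued phase structure out of the equality case of the triangle inequality. Once the sign-flip relation $\omega_j=-\omega_i$ is in hand, the bipartition is a routine connectivity argument; the only care needed is that both $U$ and $W$ are nonempty, which holds because every edge flips the phase and, $A$ being irreducible of order $n\ge2$, at least one edge exists.
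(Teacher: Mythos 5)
Your proof is correct. The paper does not actually reproduce a proof of Theorem \ref{th1}---it cites Cvetkovi\'{c}, Doob, and Sachs and remarks that the argument rests entirely on Perron--Frobenius theory---and your self-contained version is exactly that standard argument: subinvariance $A|y|\ge\rho|y|$ upgraded to equality via the positive left Perron vector, then the equality case of the triangle inequality plus strong connectivity to extract the two-valued phase and hence the bipartition, with the boundary issues ($n\ge2$, nonemptiness of both classes) handled correctly.
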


Theorem \ref{th1} was proved by Cvetkovi\'{c}, Doob, and Sachs (\cite{CDS80},
p. 83) for strongly connected digraphs, but their proof extends with no change
to any irreducible nonnegative square matrix. Let us emphasize that Theorem
\ref{th1} also has nothing to do with graphs---its crux is the irreducibility
of $A,$ and the proof rests entirely on the Perron-Frobenius theory.

It should be noted that in \cite{EsHa81}, Esser and Harary stated similar
results for digraphs, but omitted the requirement for strong\ connectivity,
thereby compromising their theorems. For example, under the spell of Theorem
B, they claim on p. 18 of \cite{EsHa81} that \emph{"A digraph is bipartite if
and only if its adjacency matrix has a symmetric spectrum."} This is easily
disproved; e.g., the digraph with adjacency matrix
\[
A_{1}=\left[
\begin{array}
[c]{cccc}%
0 & 1 & 1 & 1\\
1 & 0 & 1 & 1\\
0 & 0 & 0 & 1\\
0 & 0 & 1 & 0
\end{array}
\right]
\]
has spectrum $\left\{  1,1,-1,-1\right\}  ,$ but is not bipartite. Moreover,
since $A_{1}$ is cospectral with the bipartite matrix%
\[
A_{2}=\left[
\begin{array}
[c]{cccc}%
0 & 1 & 0 & 0\\
1 & 0 & 0 & 0\\
0 & 0 & 0 & 1\\
0 & 0 & 1 & 0
\end{array}
\right]  ,
\]
it follows that bipartite digraphs cannot be characterized by their spectra in
general. For additional examples to the same effect the construction of
$A_{1}$ can be generalized as follows: take two disjoint bipartite graphs $G$
and $H,$ and add all arcs from $G$ to $H.$ The resulting digraph is
non-bipartite, as it contains transitive $3$-cycles, but its spectrum is
symmetric, as it is the multiset union of the spectra of $G$ and $H$.

On the positive side, Theorem \ref{th1} implies the following corollary:

\begin{corollary}
\label{cor1}If $A$ is a symmetric nonnegative matrix with symmetric spectrum,
then $A$ is bipartite.
\end{corollary}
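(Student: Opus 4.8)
The plan is to reduce the statement to the irreducible case handled by Theorem \ref{th1}, exploiting the fact that a symmetric nonnegative matrix splits, after a simultaneous permutation of rows and columns, into a direct sum of irreducible symmetric nonnegative blocks $A_1,\dots,A_k$ (the connected components of the weighted graph of $A$). I would argue by induction on the number $k$ of blocks. The base case $k=1$ is immediate: an irreducible symmetric nonnegative $A$ has its spectral radius $\rho$ as an eigenvalue by Perron--Frobenius, and a symmetric spectrum then forces $-\rho$ to be an eigenvalue as well, so Theorem \ref{th1} gives that $A$ is bipartite.

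For the inductive step, let $\rho=\max_i\rho_i$ be the spectral radius of $A$, where $\rho_i$ denotes the spectral radius of $A_i$. Since the spectrum of $A$ is symmetric, $-\rho$ is an eigenvalue of $A$, hence of some block $A_j$; but every eigenvalue of $A_i$ has modulus at most $\rho_i\le\rho$, so the eigenvalue $-\rho$, of modulus $\rho$, can belong only to a block with $\rho_j=\rho$. For that block, $\rho_j$ is its spectral radius and $-\rho_j=-\rho$ is an eigenvalue, so Theorem \ref{th1} shows that $A_j$ is bipartite.

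The key move is now to peel off this block while preserving the symmetry of the remaining spectrum. By Proposition \ref{pro1}, the bipartite block $A_j$ has a symmetric spectrum. Writing $A'$ for the direct sum of the remaining $k-1$ blocks, the spectrum of $A$ is the multiset union of the spectra of $A_j$ and $A'$; since the first two are invariant under negation, so is the third, i.e. $A'$ again has a symmetric spectrum. By the induction hypothesis every block of $A'$ is bipartite, and together with $A_j$ this makes all blocks of $A$ bipartite. Choosing a bipartition $U_i\cup W_i$ of each block and setting $U=\bigcup_i U_i$ and $W=\bigcup_i W_i$, the block-diagonal structure annihilates all of $A\left[U,U\right]$ and $A\left[W,W\right]$, so $A$ itself is bipartite.

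The main obstacle to guard against is the temptation to apply Theorem \ref{th1} to each block directly: in the reducible case an individual block need not inherit a symmetric spectrum, so the symmetry hypothesis cannot be distributed across the components for free. The argument circumvents this by using the Perron--Frobenius modulus bound to locate $-\rho$ in a top block, establishing bipartiteness there first, and only then invoking Proposition \ref{pro1} as the bookkeeping device that transports the spectral symmetry down to the smaller matrix $A'$. I expect the only delicate points to be the two standard facts being relied upon: that a symmetric nonnegative matrix decomposes into irreducible blocks, and that multiset difference commutes with negation, so that the symmetry of the spectrum really does pass to $A'$.
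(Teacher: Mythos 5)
Your argument is correct, and it is essentially the route the paper intends: Corollary \ref{cor1} is stated there without proof as a consequence of Theorem \ref{th1}, and the identical strategy (split into irreducible components, locate $-\rho$ in a component of maximal spectral radius, apply the irreducible result there, use the symmetry of that component's spectrum to transfer spectral symmetry to the remaining blocks, and iterate) is exactly what the paper carries out for the hypermatrix analogue, Theorem \ref{th2}. No substantive difference.
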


In the light of Proposition \ref{pro1}, Theorem \ref{th1}, and Corollary
\ref{cor1}, we see that Theorem B is just a modest corollary of more general
results. Thus, the main goal of this paper is to generalize Proposition
\ref{pro1}, Theorem \ref{th1}, and Corollary \ref{cor1} to hypergraphs and hypermatrices.

Let us recall that in \cite{PeZh14}, Pearson and Zhang raised a similar
problem, which can be stated as: \emph{characterize all connected }%
$r$\emph{-graphs with symmetric spectrum. }Some incomplete solutions to this
problem were given in \cite{Nik14}, \cite{SSW15}, and \cite{ZSWB14}. In this
paper we find necessary and sufficient conditions for the symmetry of the
spectrum of a nonnegative symmetric hypermatrices and obtain a complete
solution of the problem of Pearson and Zhang. We also disprove a conjecture
stated by Zhou, Sun, Wang, and Bu in \cite{ZSWB14}.

The structure of the remaining part of the paper is as follows: In Section
\ref{HM} we give some basic definitions and results for hypermatrices and
hypergraphs. Section \ref{OC} is dedicated to properties of hypermatrices and
hypergraphs that generalize bipartiteness. \ In that section we construct some
families of hypergraphs that disprove the conjecture of Zhou et al. mentioned
above. The main results of the paper are in Section \ref{OE}, where we
characterize nonnegative symmetric hypermatrices with symmetric spectrum.
Finally, in Section \ref{CR} we discuss a few general questions of spectral
hypergraph theory.

\section{\label{HM}Hypermatrices and their eigenvalues}

Let $r\geq2,$ and let $n_{1},\ldots,n_{r}$ be positive integers. An
$r$\emph{-matrix} of order $n_{1}\times\cdots\times n_{r}$ is a function
defined on the Cartesian product $\left[  n_{1}\right]  \times\cdots
\times\left[  n_{r}\right]  .$ In this note we consider only the case
$n_{1}=n,\ldots,n_{r}=n$ and call such an $r$-matrix a \emph{cubical}
$r$-\emph{matrix} of order $n.$\footnote{In graph theory the order of a
(hyper)graph is the number of its vertices, and in much of matrix theory the
order of a square matrix means the number of its rows. We keep these
meanings.}

Hereafter, \textquotedblleft matrix\textquotedblright\ will stand for
\textquotedblleft$r$-matrix\textquotedblright\ with unspecified $r$; thus,
ordinary\ matrices will be referred to as \textquotedblleft$2$%
-matrices\textquotedblright. We denote matrices by capital letters, whereas
their values are denoted by the corresponding lowercase letter with the
variables listed as subscripts. For example, if $A$ is an $r$-matrix of order
$n,$ we let $a_{i_{1},\ldots,i_{r}}:=A\left(  i_{1},\ldots,i_{r}\right)  $ for
all $i_{1},\ldots,i_{r}\in\left[  n\right]  .$

In analogy to $2$-matrices, given a cubical $r$-matrix $A$ of order $n$ and a
set $X\subset\left[  n\right]  $, we write $A\left[  X\right]  $ for the
cubical matrix $a_{i_{1},\ldots,i_{r}},$ of all $\left\{  i_{1},\ldots
,i_{r}\right\}  \in X^{r},$ and call $A\left[  X\right]  $ a \emph{principal}
submatrix of $A$ \emph{induced} by $X.$

Let $A$ be a cubical $r$-matrix of order $n.$ Following the general setup of
\cite{Qi05}, define the eigenvalues of $A$ as in \cite{CPZ08}: an
\emph{eigenvalue} of $A$ is a complex number $\lambda$ that satisfies the
equations%
\begin{equation}
\lambda x_{k}^{r-1}=\sum_{i_{2},\ldots,i_{r}}a_{k,i_{2},\ldots,i_{r}}x_{i_{2}%
}\cdots x_{i_{r}}\ \ \ \ k=1,\ldots,n, \label{alg}%
\end{equation}
for some nonzero complex vector $\left(  x_{1},\ldots,x_{n}\right)  $, called
an \emph{eigenvector} to $\lambda.$ The eigenvalues of $A$ are the roots of
its \emph{characteristic polynomial} $\phi_{A}\left(  x\right)  $ (see
\cite{HHLQ13} for details on $\phi_{A}\left(  x\right)  $) and the multiset of
all roots of $\phi_{A}\left(  x\right)  $ is called the \emph{spectrum }\ of
$A.$ In particular, the multiplicity of an eigenvalue $\lambda$ as a root of
$\phi_{A}\left(  x\right)  $ is called the \emph{algebraic multiplicity} of
$\lambda.$ The spectral radius $\rho\left(  A\right)  $ of $A$ is the largest
modulus of its eigenvalues.

A useful subset of eigenvalues was introduced by Qi in \cite{Qi05}: an
$H$\emph{-eigenvalue} of $A$ is a real number $\lambda,$ which satisfies the
equations (\ref{alg}) for some nonzero real vector $\left(  x_{1},\ldots
,x_{n}\right)  $, called an $H$-$\emph{eigenvector}$ to $\lambda.$ The
$H$\emph{-spectrum} of $A$ is defined as the set of all $H$-eigenvalues and
the $H$\emph{-spectral radius} $\rho_{H}\left(  A\right)  $ of $A$ is the
largest modulus of its $H$-eigenvalues.

Defining symmetry of the spectrum of a cubical hypermatrix is a problem of its
own. Following the familiar path, we say that the spectrum of a cubical matrix
$A$ is \emph{symmetric} if it is the same as the spectrum of $-A.$ It can be
shown that the spectrum of $A$ is symmetric if and only if for every
eigenvalue $\lambda$ of $A$, $-\lambda$ is also an eigenvalue of $A$ with the
same algebraic multiplicity as $\lambda$. In contrast, we say that the
$H$-spectrum of a cubical matrix $A$ is \emph{symmetric} if for every
$H$-eigenvalue $\lambda$ of $A$, $-\lambda$ is also an $H$-eigenvalue of $A$.
Thus, eigenvalue multiplicity is irrelevant for the symmetry of the $H$-spectrum.

We also suggest a geometric spectral symmetry:

\begin{definition}
The spectrum of a cubical matrix $A$ of order $n$ is called
\textbf{geosymmetric} if there exists a unitary diagonal operator
$\Phi:\mathbb{C}^{n}\rightarrow\mathbb{C}^{n}$ such that if $\lambda$ is an
eigenvalue of $A$ with eigenvector $\mathbf{x},$ then $-\lambda$ is also an
eigenvalue of $A$ with eigenvector $\Phi\left(  \mathbf{x}\right)  $. \ 
\end{definition}

\begin{definition}
The $H$-spectrum of a cubical matrix $A$ of order $n$ is called
\textbf{geosymmetric} if there exists an orthogonal diagonal operator
$\Phi:\mathbb{R}^{n}\rightarrow\mathbb{R}^{n}$ such that if $\lambda$ is an
eigenvalue of $A$ with eigenvector $\mathbf{x},$ then $-\lambda$ is also an
eigenvalue of $A$ with eigenvector $\Phi\left(  \mathbf{x}\right)  $.
\end{definition}

Geosymmetric spectrum is a new concept even for $2$-matrices. For example, the
spectrum of the $2$-matrix%
\[
\left[
\begin{array}
[c]{cc}%
I_{n} & 0\\
0 & -I_{n}%
\end{array}
\right]
\]
is symmetric. but not geosymmetric. Hence, Proposition \ref{pro1} may be put
in a stronger form:

\begin{proposition}
\label{pro1.1}If a $2$-matrix is bipartite, then its spectrum is geosymmetric.
\end{proposition}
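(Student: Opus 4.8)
The plan is to exhibit a single explicit unitary diagonal operator $\Phi$, read off directly from the bipartition, and to verify the required eigenvector correspondence by a one-line anticommutation argument. Since $A$ is bipartite, I fix a partition $\left[ n\right] =U\cup W$ with $A\left[ U,U\right] =0$ and $A\left[ W,W\right] =0$, and set $\Phi$ to be the diagonal operator $D=\operatorname{diag}\left( \varepsilon _{1},\ldots ,\varepsilon _{n}\right) $ with $\varepsilon _{i}=1$ for $i\in U$ and $\varepsilon _{i}=-1$ for $i\in W$. Each $\varepsilon _{i}$ has modulus $1$, so $D$ is a unitary diagonal operator; moreover $D^{2}=I$, hence $D^{-1}=D$. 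The whole proposition will then follow once I show that $A$ anticommutes with $D$.

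The core computation is that $AD=-DA$. Note first that for a $2$-matrix the eigenvalue equations (\ref{alg}) reduce to the ordinary relation $A\mathbf{x}=\lambda \mathbf{x}$, so the geosymmetry condition is genuinely about the usual eigenpairs. Writing $A$ in the block form determined by $U$ and $W$,
\[
A=\left[
\begin{array}
[c]{cc}
0 & B\\
C & 0
\end{array}
\right] ,\qquad
D=\left[
\begin{array}
[c]{cc}
I & 0\\
0 & -I
\end{array}
\right] ,
\]
a direct multiplication gives $DAD=-A$, i.e. $AD=-DA$. Entrywise this is even more transparent: $\left( AD\right) _{i,j}=\varepsilon _{j}a_{i,j}$ and $\left( DA\right) _{i,j}=\varepsilon _{i}a_{i,j}$, and these are negatives of one another because $a_{i,j}=0$ whenever $i,j$ lie in the same part, so the only nonzero entries have $\varepsilon _{i}=-\varepsilon _{j}$.

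With the anticommutation in hand the conclusion is immediate. If $A\mathbf{x}=\lambda \mathbf{x}$ with $\mathbf{x}\neq 0$, then $D\mathbf{x}\neq 0$ since $D$ is invertible, and
\[
A\left( D\mathbf{x}\right) =\left( AD\right) \mathbf{x}=-\left( DA\right) \mathbf{x}=-D\left( \lambda \mathbf{x}\right) =-\lambda \left( D\mathbf{x}\right) ,
\]
so $-\lambda $ is an eigenvalue of $A$ with eigenvector $\Phi \left( \mathbf{x}\right) =D\mathbf{x}$. Crucially, the operator $\Phi =D$ is fixed in advance and works uniformly for every eigenpair, which is exactly what the definition of geosymmetric spectrum demands.

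I do not anticipate a genuine obstacle here. The only points needing a moment of care are checking that the polynomial eigenvalue equations (\ref{alg}) specialize to the standard matrix equation when $r=2$, and confirming that the sign operator is diagonal and unitary (not merely orthogonal) so that it qualifies as a $\Phi $ in the complex definition; both are routine. The substantive content is simply that the bipartite block structure forces $AD=-DA$, which upgrades the sign-flipping argument behind Proposition \ref{pro1} from a statement about the multiset of eigenvalues to a uniform statement at the level of eigenvectors.
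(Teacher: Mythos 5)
Your proof is correct and is essentially the argument the paper intends: the paper proves Proposition \ref{pro1} "by negating eigenvectors over one of the partition sets," which is precisely your diagonal sign operator $D$ with $DAD=-A$, and the same operator serves as the fixed $\Phi$ in the geosymmetric refinement (it is the $r=2$ specialization of the unitary diagonal map $y_k=e^{2\varphi(k)\pi i/r}x_k$ used in Theorem \ref{thOC}). No issues.
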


The new detail prompts a new search for a reasonable converse:

\begin{question}
\label{qu1}Which square $2$-matrices with geosymmetric spectrum are bipartite?
\end{question}

It is not hard to see that for $2$-matrices \textquotedblleft geosymmetric
spectrum\textquotedblright\ implies \textquotedblleft symmetric
spectrum\textquotedblright,\ and for any matrix \textquotedblleft geosymmetric
$H$-spectrum\textquotedblright\ implies \textquotedblleft symmetric
$H$-spectrum\textquotedblright, but the general relation is not so clear:

\begin{question}
Let $r\geq3$ and $A$ be an $r$-matrix with geosymmetric spectrum. Is it always
true that the spectrum of $A$\ is symmetric?
\end{question}

We shall also need a Perron-Frobenius type theorem, so we give some
definitions next: The \emph{digraph} $\mathcal{D}\left(  A\right)  $ of a
cubical $r$-matrix of order $n$ is defined by setting $V\left(  \mathcal{D}%
\left(  A\right)  \right)  :=\left[  n\right]  $ and letting $\left\{
k,j\right\}  \in E\left(  \mathcal{D}\left(  A\right)  \right)  $ whenever
there is a nonzero entry $a_{k,i_{2},\ldots,i_{r}}$ such that $j\in\left\{
i_{2},\ldots,i_{r}\right\}  .$ Following \cite{FGH11}, a cubical matrix is
called \emph{weakly irreducible} if its digraph is strongly connected; if a
cubical matrix is not weakly irreducible, it is called \emph{weakly reducible}.

The combined work of Chang, Pearson, and Zhang \cite{CPZ08}, Yang and Yang
\cite{YaYa10}, and Friedland, Gaubert, and Han \cite{FGH11} laid the ground
for a Perron-Frobenius theory of nonnegative hypermatrices. Of this large body
of work we shall need the following theorem:

\begin{theorem}
\label{PF}If $A$ is a nonnegative cubical matrix, then $\rho\left(  A\right)
$ is an eigenvalue of $A.$ If $A$ is also weakly irreducible and $\mathbf{x}$
is a nonnegative eigenvector to $\rho\left(  A\right)  $, then $\mathbf{x}$ is positive.
\end{theorem}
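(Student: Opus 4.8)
The plan is to prove Theorem~\ref{PF} in two stages, treating the existence claim and the positivity claim separately, and to lean entirely on the cited Perron--Frobenius machinery of \cite{CPZ08}, \cite{YaYa10}, and \cite{FGH11} rather than reconstructing it from scratch. First I would settle the existence of $\rho(A)$ as an eigenvalue for a general nonnegative cubical matrix $A$. The natural route is to realize the spectral radius variationally: define for nonnegative $A$ and $\mathbf{x}\ge 0$ the homogeneous ratio obtained from the right-hand side of (\ref{alg}), namely $\min_k \bigl(\sum_{i_2,\ldots,i_r} a_{k,i_2,\ldots,i_r} x_{i_2}\cdots x_{i_r}\bigr)/x_k^{r-1}$ over coordinates with $x_k>0$, and take the supremum over the (compact) set of nonnegative unit vectors. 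A compactness-plus-continuity argument, which is exactly the content of the Collatz--Wielandt characterization established in \cite{YaYa10} and \cite{FGH11}, produces a nonnegative maximizer that is an eigenvector whose eigenvalue equals $\rho(A)$.

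Second I would upgrade a nonnegative eigenvector to a strictly positive one under the added hypothesis that $A$ is weakly irreducible, i.e.\ that its digraph $\mathcal{D}(A)$ is strongly connected. Let $\mathbf{x}\ge 0$ be a nonnegative eigenvector to $\rho(A)$, and let $S=\{k:x_k=0\}$. The key step is to show $S=\emptyset$: suppose some $k\in S$, and examine equation (\ref{alg}) at index $k$, which forces $\sum a_{k,i_2,\ldots,i_r} x_{i_2}\cdots x_{i_r}=0$. Since every term is nonnegative, each nonzero entry $a_{k,i_2,\ldots,i_r}$ must multiply a vanishing monomial, so at least one of $i_2,\ldots,i_r$ lies in $S$. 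By the definition of $\mathcal{D}(A)$, this means every out-neighbor of $k$ reachable through a nonzero entry is again constrained to send its mass into $S$, propagating the zero set along directed edges. Strong connectivity then lets the vanishing spread from $S$ to all of $[n]$, contradicting $\mathbf{x}\neq 0$; hence $S=\emptyset$ and $\mathbf{x}$ is positive.

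The main obstacle I expect is in making the propagation argument in the second stage fully rigorous for $r\ge 3$. In the matrix case $r=2$ the passage ``$x_k=0$ and $a_{k,j}\neq 0$ implies $x_j=0$'' is immediate, but for hypermatrices each nonzero entry couples $r-1$ indices multiplicatively, so vanishing of a monomial only tells us that \emph{some} index in $\{i_2,\ldots,i_r\}$ is in $S$, not a specific one. I would handle this by defining the zero set $S$ and arguing that $k\in S$ implies the existence of a directed edge $\{k,j\}\in E(\mathcal{D}(A))$ with $j\in S$; iterating and invoking strong connectivity shows $S$ is either empty or all of $[n]$. This is precisely the subtlety that the weak-irreducibility formalism of \cite{FGH11} is designed to absorb, so the cleanest presentation will cite their strong-connectivity argument directly and verify only that our hypotheses match theirs.
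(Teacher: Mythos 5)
First, a point of comparison that matters here: the paper does not prove Theorem~\ref{PF} at all --- it is imported verbatim from the cited literature (\cite{CPZ08}, \cite{YaYa10}, \cite{FGH11}), so there is no in-paper argument to match, and your proposal has to stand on its own. Your first stage is acceptable as a pointer to those sources, though as written it is slightly off: for a general (possibly weakly reducible) nonnegative tensor the Collatz--Wielandt supremum over the closed simplex need not be attained by an eigenvector (the min-ratio is not continuous where coordinates vanish), and the standard proof that $\rho(A)$ is an eigenvalue perturbs $A$ to a positive tensor $A+\varepsilon\mathcal{E}$, applies the irreducible case, and lets $\varepsilon\to 0$.

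The genuine gap is in your second stage. From $x_k=0$ and equation (\ref{alg}) you correctly deduce that every nonzero entry $a_{k,i_2,\ldots,i_r}$ has \emph{some} $i_j$ in the zero set $S$, hence that every vertex of $S$ has \emph{at least one} out-neighbour in $S$. But that property, combined with strong connectivity of $\mathcal{D}(A)$, does not force $S$ to be empty or all of $[n]$: in the complete digraph on $\{1,2,3\}$ the set $S=\{1,2\}$ satisfies ``every vertex of $S$ has an out-neighbour in $S$'' yet is a proper nonempty subset. What strong connectivity actually excludes is a nonempty proper $S$ that is \emph{out-closed}, i.e.\ with all out-neighbours of $S$ inside $S$ --- and that is exactly the conclusion you cannot draw for $r\ge 3$, since a nonzero entry couples $r-1$ indices and the vanishing of its monomial pins down only one of them. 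Your propagation would go through under the stronger Chang--Pearson--Zhang notion of irreducibility (for every nonempty proper $S$ there is a nonzero $a_{k,i_2,\ldots,i_r}$ with $k\in S$ and all of $i_2,\ldots,i_r\notin S$), but weak irreducibility is strictly weaker, and the published proofs use different machinery (the majorization matrix of $A$, the Hilbert projective metric, and uniqueness of the positive eigenpair) rather than coordinatewise spreading. So the step ``iterating and invoking strong connectivity shows $S$ is either empty or all of $[n]$'' is unjustified, and it is precisely the point where the real work lies; deferring to \cite{FGH11} is legitimate, but the sketch you give of their argument is not what they do and does not close the gap.
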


\subsection{Eigenvalues of real symmetric matrices}

A cubical $r$-matrix is called \emph{symmetric} if $a_{i_{1},\ldots,i_{r}%
}=a_{p\left(  i_{1},\ldots,i_{r}\right)  }$ for every $\left(  i_{1}%
,\ldots,i_{r}\right)  \in\left[  n\right]  ^{r}$ and every permutation
$p\left(  i_{1},\ldots,i_{r}\right)  $ of $\left(  i_{1},\ldots,i_{r}\right)
.$

For a real symmetric $2$-matrix eigenvalues can be alternatively defined by
taking the Lagrange multipliers at the critical points of the matrix quadratic
form over the Euclidean sphere. In \cite{Qi05} and \cite{Qi13}, Qi showed that
some of these relations carry over to $r$-matrices.

Let $A$ be a real symmetric $r$-matrix of order $n,$ and for any real vector
$\mathbf{x}:=\left(  x_{1},\ldots,x_{n}\right)  $, define the \emph{polynomial
form} $P_{A}\left(  \mathbf{x}\right)  $ of $A$ as
\[
P_{A}\left(  \mathbf{x}\right)  :=\sum_{i_{1},\ldots,i_{r}}a_{i_{1}%
,\ldots,i_{r}}x_{i_{1}}\cdots x_{i_{r}}.
\]
Note that polynomial forms generalize quadratic forms to $r$-matrices. In
particular, note the crucial identity%
\[
\frac{dP_{A}\left(  \mathbf{x}\right)  }{dx_{k}}=r\sum_{i_{2},\ldots,i_{r}%
}a_{k,i_{2},\ldots,i_{r}}x_{i_{2}}\cdots x_{i_{r}}.
\]
Further, write $\mathbb{S}_{r}^{n-1}$ for the set of all real $n$-vectors
$\left(  x_{1},\ldots,x_{n}\right)  $ with $\left\vert x_{1}\right\vert
^{r}+\cdots+\left\vert x_{n}\right\vert ^{r}=1,$ and define the parameter
$\eta\left(  A\right)  $ of $A$ as
\[
\eta\left(  A\right)  :=\max\{P_{A}\left(  \mathbf{x}\right)  :\mathbf{x}%
\in\mathbb{S}_{r}^{n-1}\}.
\]
In \cite{Qi13}, Qi showed that if $A$ is a symmetric nonnegative matrix, then
$\rho\left(  A\right)  =\eta\left(  A\right)  $\footnote{For hypergraphs this
equality has been proved by Cooper and Dutle in \cite{CoDu11}.}. We shall need
this result with an extra detail, so we reproduce the proof of Qi.

\begin{proposition}
\label{proC}If $A$ is a nonnegative symmetric matrix, then $\rho\left(
A\right)  =\eta\left(  A\right)  .$ If $\mathbf{x}\in\mathbb{S}_{r}^{n-1}$ and
$\eta\left(  A\right)  =P_{A}\left(  \mathbf{x}\right)  $, then $\mathbf{x}$
is an $H$-eigenvector to $\rho\left(  A\right)  $.
\end{proposition}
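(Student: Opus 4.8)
The plan is to prove the two assertions of Proposition \ref{proC} together by analyzing the maximizer of $P_A$ on the compact set $\mathbb{S}_r^{n-1}$. Since $\mathbb{S}_r^{n-1}$ is compact and $P_A$ is continuous, the maximum $\eta(A)$ is attained at some $\mathbf{x}\in\mathbb{S}_r^{n-1}$. First I would observe that because $A$ is nonnegative, replacing each coordinate $x_i$ by $|x_i|$ does not decrease $P_A$ (every monomial $a_{i_1,\ldots,i_r}x_{i_1}\cdots x_{i_r}$ is dominated in absolute value by $a_{i_1,\ldots,i_r}|x_{i_1}|\cdots|x_{i_r}|$, and the constraint is preserved since $|x_i|^r=||x_i||^r$). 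Hence we may assume the maximizer $\mathbf{x}$ is nonnegative.

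Next I would apply the method of Lagrange multipliers. The constraint $g(\mathbf{x}):=\sum_i |x_i|^r - 1 = 0$ has gradient $\frac{dg}{dx_k}=r\,\mathrm{sgn}(x_k)|x_k|^{r-1}$, which for a nonnegative maximizer reduces to $r\,x_k^{r-1}$. At the maximizer, for some multiplier $\mu$ we get $\frac{dP_A(\mathbf{x})}{dx_k}=\mu\,\frac{dg(\mathbf{x})}{dx_k}$ for all $k$. Using the crucial identity stated in the excerpt, $\frac{dP_A(\mathbf{x})}{dx_k}=r\sum_{i_2,\ldots,i_r}a_{k,i_2,\ldots,i_r}x_{i_2}\cdots x_{i_r}$, this becomes
\[
r\sum_{i_2,\ldots,i_r}a_{k,i_2,\ldots,i_r}x_{i_2}\cdots x_{i_r}=\mu\, r\, x_k^{r-1},\qquad k=1,\ldots,n,
\]
so $\mathbf{x}$ satisfies the eigenvalue equations (\ref{alg}) with eigenvalue $\mu$. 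To identify $\mu$ with $\eta(A)$, I would contract these equations with $\mathbf{x}$: multiply the $k$th equation by $x_k$ and sum over $k$, which gives $P_A(\mathbf{x})=\mu\sum_k x_k^r=\mu$ on the left (by symmetry each monomial appears with the correct multiplicity so the contracted left side is exactly $P_A(\mathbf{x})$) and $\mu\sum_k x_k^r=\mu$ on the right since $\mathbf{x}\in\mathbb{S}_r^{n-1}$ and $x_k\geq0$. Thus $\mu=P_A(\mathbf{x})=\eta(A)$, and since $\mathbf{x}$ is a nonzero real vector, $\eta(A)$ is an $H$-eigenvalue of $A$ with $H$-eigenvector $\mathbf{x}$.

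It remains to show $\rho(A)=\eta(A)$. The inequality $\eta(A)\leq\rho(A)$ is immediate, since $\eta(A)$ is an eigenvalue of $A$ and $\rho(A)$ is the largest modulus among all eigenvalues. For the reverse inequality $\rho(A)\leq\eta(A)$, I would invoke Theorem \ref{PF}: since $A$ is nonnegative, $\rho(A)$ is itself an eigenvalue of $A$, with some eigenvector $\mathbf{y}$. Taking absolute values coordinatewise and rescaling $\mathbf{y}$ to lie on $\mathbb{S}_r^{n-1}$, the nonnegativity of $A$ yields $\rho(A)=\rho(A)\sum_k|y_k|^r=|P_A(\mathbf{y})|\leq P_A(|\mathbf{y}|)\leq\eta(A)$, where the middle step again contracts (\ref{alg}) against $\mathbf{y}$ and uses the triangle inequality together with $a_{i_1,\ldots,i_r}\geq0$. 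Combining the two inequalities gives $\rho(A)=\eta(A)$.

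\textbf{The main obstacle} I anticipate is the bookkeeping in the contraction step and in the absolute-value reduction. One must verify carefully that the symmetry of $A$ makes the contracted equation $\sum_k x_k\cdot\frac{dP_A}{dx_k}=r\,P_A(\mathbf{x})$ hold exactly (Euler's identity for the degree-$r$ homogeneous form $P_A$), and that the constraint gradient has no sign subtleties at the nonnegative maximizer when $r$ is odd versus even — this is why reducing to a nonnegative maximizer first is essential, as it lets us write $|x_k|^{r-1}=x_k^{r-1}$ cleanly. The Perron–Frobenius input from Theorem \ref{PF} handles the one direction that the variational argument alone does not obviously supply, namely that no eigenvalue exceeds $\eta(A)$ in modulus.
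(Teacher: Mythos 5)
Your overall route is the one the paper takes: maximize $P_A$ on the compact set $\mathbb{S}_r^{n-1}$, apply Lagrange multipliers, contract the resulting equations against the maximizer to identify the multiplier with $\eta(A)$, and use the Perron--Frobenius input that $\rho(A)$ is an eigenvalue. However, one step is wrong as written. In your chain for $\rho(A)\leq\eta(A)$, the equality $\rho(A)\sum_k|y_k|^r=|P_A(\mathbf{y})|$ fails for a general eigenvector $\mathbf{y}$ of $\rho(A)$: contracting (\ref{alg}) against $\mathbf{y}$ gives $\rho(A)\sum_k y_k^r=P_A(\mathbf{y})$, and since $\bigl|\sum_k y_k^r\bigr|$ may be strictly smaller than $\sum_k|y_k|^r=1$ (the entries $y_k$ can be complex, or real of mixed sign when $r$ is odd), the triangle inequality applied after summing yields only $|P_A(\mathbf{y})|\leq\rho(A)$ --- the wrong direction. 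The repair is to apply the triangle inequality to each of the $n$ eigenequations separately, $\rho(A)|y_k|^r=\bigl|y_k\sum a_{k,i_2,\ldots,i_r}y_{i_2}\cdots y_{i_r}\bigr|\leq\sum a_{k,i_2,\ldots,i_r}|y_k||y_{i_2}|\cdots|y_{i_r}|$, and then sum to get $\rho(A)\leq P_A(|\mathbf{y}|)\leq\eta(A)$; alternatively, do what the paper does and start from a nonnegative eigenvector to $\rho(A)$, whose existence is part of the Perron--Frobenius theory being invoked.

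Separately, your reduction ``we may assume the maximizer $\mathbf{x}$ is nonnegative'' establishes the second assertion only for nonnegative maximizers, whereas the proposition claims that \emph{every} $\mathbf{x}\in\mathbb{S}_r^{n-1}$ with $P_A(\mathbf{x})=\eta(A)$ is an $H$-eigenvector to $\rho(A)$; sign-indefinite maximizers do occur (e.g. $(-1/\sqrt{2},-1/\sqrt{2})$ for the $2$-matrix of a single edge), and the application in Theorem \ref{symH} relies on the conclusion for a specific given maximizer rather than for some conveniently chosen one. The paper runs the Lagrange argument on an arbitrary maximizer and then closes the loop with the chain $\rho(A)\geq\lambda\geq\lambda\sum_k x_k^r=\eta(A)\geq\rho(A)$, which forces $\lambda=\rho(A)$ for that very $\mathbf{x}$. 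Your version discards exactly the maximizers that the second claim is about, so you should either drop the normalization to nonnegative vectors or add an argument showing that a general maximizer still satisfies the eigenequations for $\rho(A)$.
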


\begin{proof}
Suppose that $\left(  x_{1},\ldots,x_{n}\right)  $ is a nonnegative
eigenvector to $\rho\left(  A\right)  $. Since equations (\ref{alg}) are
homogeneous, we may force $x_{1}^{r}+\cdots+x_{n}^{r}=1,$ and so
\[
\rho\left(  A\right)  =\rho\left(  A\right)  \sum_{k\in\left[  n\right]
}x_{k}^{r}=\sum_{k\in\left[  n\right]  }\sum_{i_{2},\ldots,i_{r}}%
a_{k,i_{2},\ldots,i_{r}}x_{k}x_{i_{2}}\cdots x_{i_{r}}=P_{A}\left(
\mathbf{x}\right)  \leq\eta\left(  A\right)  .
\]

Next, let $\eta\left(  A\right)  =$ $P_{A}\left(  x_{1},\ldots,x_{n}\right)  $
for some $\left(  x_{1},\ldots,x_{n}\right)  \in\mathbb{S}_{r}^{n-1}$. \ The
function $\left\vert y_{1}\right\vert ^{r}+\ldots+\left\vert y_{n}\right\vert
^{r}$ has continuous derivatives in each variable; hence the Lagrange
multiplier method implies that there is some $\lambda$ such that
\[
\lambda x_{k}^{r-1}=\sum_{i_{2},\ldots,i_{r}}a_{k,i_{2},\ldots,i_{r}}x_{i_{2}%
}\cdots x_{i_{r}},\text{ \ \ \ \ }k=1,\ldots,n.
\]
Thus, $\lambda$ is an $H$-eigenvalue of $A.$ Now we find that
\[
\rho\left(  A\right)  \geq\lambda\geq\lambda\sum_{k\in\left[  n\right]  }%
x_{k}^{r}=\sum_{k\in\left[  n\right]  }\sum_{i_{2},\ldots,i_{r}}%
a_{k,i_{2},\ldots,i_{r}}x_{k}x_{i_{2}}\cdots x_{i_{r}}=\eta\left(  A\right)
\geq\rho\left(  A\right)  .
\]
Hence, equality holds throughout, and so $\left(  x_{1},\ldots,x_{n}\right)  $
is an eigenvector to $\rho\left(  A\right)  $.
\end{proof}

Note that the digraph $\mathcal{D}\left(  A\right)  $ of a symmetric matrix
$A$ is an undirected $2$-graph. If $A$ is a weakly reducible symmetric matrix,
then $\mathcal{D}\left(  A\right)  $ is disconnected and the vertices of each
component of $\mathcal{D}\left(  A\right)  $ induce a weakly irreducible
principal submatrix of $A,$ called a \emph{component }of $A$. Clearly, $A$ is
a\ block diagonal matrix of its components. It is not hard to see that every
eigenvalue of $A$ is an eigenvalue of one or more of its components, and vice
versa. In \cite{CoDu11}, Cooper and Dutle proved a result about the
characteristic polynomial of disconnected graphs, which was extended by Hu,
Huang, Ling, and Qi \cite{HHLQ13} to weakly reducible matrices, and in
\cite{SSZ13}, Shao, Shan, and Zhang deduced an explicit relation between the
characteristic polynomials of an $r$-matrix $A$ and of its components:

\emph{Let }$A$\emph{ be a symmetric weakly reducible }$r$\emph{-matrix of
order }$n$\emph{. If }$A_{1},\ldots,A_{k}$\emph{ are the components of }%
$A$\emph{ and }$n_{1},\ldots,n_{k}$\emph{ are their orders, then}
\begin{equation}
\phi_{A}\left(  x\right)  =\prod\limits_{i\in\left[  k\right]  }\left(
\phi_{A_{i}}\left(  x\right)  \right)  ^{\left(  r-1\right)  ^{n-n_{i}}}.
\label{det}%
\end{equation}

\subsection{Hypergraphs}

An $r$-\emph{graph} consists of a set of \emph{vertices} $V\left(  G\right)  $
and a set of \emph{edges} $E\left(  G\right)  $, which are subsets of
$V\left(  G\right)  $ with exactly $r$ elements. Hereafter, \textquotedblleft
graph\textquotedblright\ will stand for \textquotedblleft$r$%
-graph\textquotedblright\ with unspecified $r$; thus, ordinary\ graphs will be
referred to as \textquotedblleft$2$-graphs\textquotedblright. The \emph{order
}of a graph is the number of its vertices. If $G$ is of order $n$ and
$V\left(  G\right)  $ is not defined explicitly, it is assumed that $V\left(
G\right)  :=[n]$.

Given an $r$-graph $G$ with vertex set $V\left(  G\right)  =\left[  n\right]
$, the \emph{adjacency matrix }$A\left(  G\right)  $ of $G$ is the $r$-matrix
of order $n,$ whose entries are defined by
\begin{equation}
a_{i_{1},\ldots,i_{r}}:=\left\{
\begin{array}
[c]{ll}%
1, & \text{if }\left\{  i_{1},\ldots,i_{r}\right\}  \in E\left(  G\right)
\text{; }\\
0, & \text{otherwise.}%
\end{array}
\right.  \label{AM}%
\end{equation}
The eigenvalues of $G$ are the eigenvalues of $A\left(  G\right)  $, the
$H$-eigenvalues of $G$ are the $H$-eigenvalues of $A\left(  G\right)  $, and
$\rho\left(  G\right)  :=\rho\left(  A\left(  G\right)  \right)  $.

Since the adjacency matrix $A\left(  G\right)  $ of any graph $G$ is
symmetric, the digraph of $A\left(  G\right)  $ is a $2$-graph, which is just
the $2$-section of $G$ (see, e.g., \cite{Nik14}, p. 533). Hence, $A\left(
G\right)  $ is weakly irreducible if and only if $G$ is connected.

A graph $G$ is called $k$\emph{-chromatic}\textbf{ }if its vertices can be
partitioned into $k$ sets so that each edge intersects at least two
sets.\textbf{ }The \emph{chromatic number}\textbf{ }$\chi\left(  G\right)  $
of $G$ is the smallest $k$ for which $G$ is $k$-chromatic. Similarly, a graph
$G$ is called $k$\emph{-partite}\ if its\textbf{ }vertices can be partitioned
into $k$ sets so that no edge has two vertices from the same set.

\section{\label{OC}Odd-colorings and odd transversals}

Let $r\geq2$ and $r$ be even. A cubical $r$-matrix $A$ of order $n$ is called
\emph{odd-colorable} if there exists a map $\varphi:\left[  n\right]
\rightarrow\left[  r\right]  $ such that if $a_{i_{1},\ldots,i_{r}}\neq0$,
then
\[
\varphi\left(  i_{1}\right)  +\cdots+\varphi\left(  i_{r}\right)  =r/2\text{
}(\operatorname{mod}\text{ }r).
\]
The function $\varphi$ is called an \emph{odd-coloring} of $A.$

Accordingly, we say that a graph is \emph{odd-colorable }if its adjacency
matrix is odd-colorable. Note that if a graph is odd colorable, so are its
subgraphs; hence, \textquotedblleft being odd-colorable\textquotedblright\ is
a monotone graph property.

Next, write\ $I_{X}$ for the indicator function of a set $X\subset\left[
n\right]  $, and let $A$ be a cubical $r$-matrix of order $n.$ A set
$X\subset\left[  n\right]  $ is called an \emph{odd transversal} of $A$ if
$a_{i_{1},\ldots,i_{r}}\neq0$ implies that
\[
I_{X}\left(  i_{1}\right)  +\cdots+I_{X}\left(  i_{r}\right)  =1\text{
}(\operatorname{mod}\text{ }2).
\]
A matrix $A$ with an odd transversal is called an \emph{odd transversal
matrix}.\footnote{Odd transversal matrices were introduced by Chen and Qi in
\cite{ChQi15} under the name \textquotedblleft weakly odd-bipartite
tensors\textquotedblright.}

Accordingly, we say that a graph is an odd transversal graph, if its adjacency
matrix has an odd transversal; that is to say, an odd transversal of a graph
is a vertex set that intersects each edge in an odd number of vertices. Note
that \textquotedblleft having an odd transversal\textquotedblright\ also is a
monotone property of graphs.

The purpose of this section is to investigate odd-colorings and odd
transversals of graphs and matrices.\ To begin with, note that if $A$ is a
square $2$-matrix, then the following three properties are equivalent:

- $A$ is odd-colorable;

- $A$ has an odd transversal;

- $A$ is bipartite.

However, for larger $r$ the situation is more complicated. Let us stress the
fact that odd-colorable $r$-matrices are defined only if $r$ is even, whereas
$r$-matrices with odd transversals may exist for any $r$; e.g., the adjacency
matrices of $r$-partite $r$-graphs have odd transversals.

First, we show that if $r$ is even, \textquotedblleft having an odd
transversal\textquotedblright\ always implies \textquotedblleft
odd-colorable.\textquotedblright

\begin{proposition}
\label{pro2}If $r$ is even and $A$ is an $r$-matrix with an odd transversal,
then $A$ is odd-colorable.
\end{proposition}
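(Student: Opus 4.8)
The plan is to convert an odd transversal $X$ directly into an odd-coloring $\varphi$ by a simple arithmetic encoding, exploiting the fact that $r$ is even. Recall that an odd transversal $X$ satisfies $I_X(i_1)+\cdots+I_X(i_r)\equiv 1\ (\operatorname{mod}\ 2)$ whenever $a_{i_1,\ldots,i_r}\neq 0$, while an odd-coloring requires a map into $[r]$ whose values sum to $r/2\ (\operatorname{mod}\ r)$ on the support of $A$. The natural idea is to assign each vertex in $X$ a large value and each vertex outside $X$ a value of $r$ (i.e. $0\ \operatorname{mod}\ r$), so that the sum over an edge is controlled modulo $r$ purely by how many of its vertices lie in $X$.

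Concretely, I would define $\varphi(i):=r$ if $i\notin X$, and on vertices of $X$ assign values so that each contributes $1\ (\operatorname{mod}\ 2)$ in a way that builds up to $r/2\ (\operatorname{mod}\ r)$. The cleanest choice is to set $\varphi(i):=1$ for $i\in X$ and $\varphi(i):=r$ for $i\notin X$. Then for any edge with $a_{i_1,\ldots,i_r}\neq 0$, letting $m:=|\{j:i_j\in X\}|$ be the number of its coordinates in $X$, the sum is $\varphi(i_1)+\cdots+\varphi(i_r)=m\cdot 1+(r-m)\cdot r\equiv m\ (\operatorname{mod}\ r)$. The odd-transversal condition forces $m$ to be odd. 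I would then need $m\equiv r/2\ (\operatorname{mod}\ r)$; since $r$ is even, $r/2$ is an integer, but an arbitrary odd $m$ need not equal $r/2\ (\operatorname{mod}\ r)$, so this naive assignment is not quite enough and must be refined.

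The main obstacle is precisely this gap: the odd-transversal condition only pins down the parity of $m$, whereas odd-colorability pins down $m$ modulo $r$. To bridge it I would choose the values on $X$ more cleverly. Observe that $r/2$ is itself an odd multiple of $r/2$, and more generally any odd multiple of $r/2$ is congruent to $r/2\ (\operatorname{mod}\ r)$ because $(2t+1)\cdot(r/2)=tr+r/2\equiv r/2$. This is the key number-theoretic fact. So I would instead set $\varphi(i):=r/2$ for every $i\in X$ and $\varphi(i):=r$ for every $i\notin X$; this is a legitimate map into $[r]$ since both $r/2$ and $r$ lie in $[r]$. Then the edge sum becomes $m\cdot(r/2)+(r-m)\cdot r\equiv m\cdot(r/2)\ (\operatorname{mod}\ r)$, and since $m$ is odd this equals $r/2\ (\operatorname{mod}\ r)$ exactly as required.

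Thus the proof reduces to verifying this single congruence, and the parity guarantee from the odd transversal supplies exactly the oddness of $m$ needed to make $m\cdot(r/2)\equiv r/2$. I expect no further difficulty: the argument is a direct construction plus one modular computation, and the evenness of $r$ is used only to ensure $r/2$ is an integer lying in $[r]$. I would conclude by stating that $\varphi$ is an odd-coloring of $A$, so $A$ is odd-colorable.
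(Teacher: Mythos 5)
Your proof is correct and is essentially the paper's own argument: the paper sets $\varphi(i):=(r/2)I_{X}(i)$ and relies on the same key observation that an odd multiple of $r/2$ is congruent to $r/2$ modulo $r$. Your only deviation is the cosmetic one of assigning $r$ rather than $0$ to vertices outside $X$ (which is, if anything, slightly more careful about the codomain $[r]$).
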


\begin{proof}
Let $X$ be an odd transversal of $A.$ For every $i\in\left[  n\right]  ,$ let
$\varphi\left(  i\right)  :=\left(  r/2\right)  I_{X}\left(  i\right)  .$ If
$a_{i_{1},\ldots,i_{r}}\neq0,$ then%
\[
I_{X}\left(  i_{1}\right)  +\cdots+I_{X}\left(  i_{r}\right)  =1\text{
}(\operatorname{mod}\text{ }2)\text{;}%
\]
hence,
\[
\varphi\left(  i_{1}\right)  +\cdots+\varphi\left(  i_{r}\right)  =\left(
r/2\right)  I_{X}\left(  i_{1}\right)  +\cdots+\left(  r/2\right)
I_{X}\left(  i_{r}\right)  =r/2\text{ }(\operatorname{mod}\text{ }r).
\]
Therefore, $\varphi\left(  i\right)  $ is an odd-coloring of $A$ and so $A$ is odd-colorable.
\end{proof}

As it turns out, if $r=2$ $(\operatorname{mod}$ $4)$, then Proposition
\ref{pro1} can be inverted, that is to say, \textquotedblleft having an odd
transversal\textquotedblright\ and \textquotedblleft
odd-colorable\textquotedblright\ are equivalent properties if $r=2$
$(\operatorname{mod}$ $4)$.

\begin{proposition}
\label{pro3}Let $r=2$ $(\operatorname{mod}4)$. An $r$-matrix $A$ is
odd-colorable if and only if it has an odd transversal.
\end{proposition}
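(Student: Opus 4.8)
The plan is to prove only the forward implication, since the reverse — that an odd transversal matrix is odd-colorable — is already supplied by Proposition \ref{pro2} (here $r \equiv 2 \pmod 4$ in particular forces $r$ to be even). So suppose $A$ is odd-colorable and fix an odd-coloring $\varphi : \left[n\right] \to \left[r\right]$. The idea is to read off an odd transversal directly from the parity pattern of $\varphi$: I would set
\[
X := \{\, i \in \left[n\right] : \varphi(i) \text{ is odd} \,\},
\]
so that by construction $I_X(i) \equiv \varphi(i) \pmod 2$ for every $i \in \left[n\right]$.

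Next I would verify the odd-transversal condition for $X$. Take any nonzero entry $a_{i_1,\ldots,i_r}$. Summing the congruence $I_X(i_j) \equiv \varphi(i_j) \pmod 2$ over the indices gives
\[
I_X(i_1) + \cdots + I_X(i_r) \equiv \varphi(i_1) + \cdots + \varphi(i_r) \pmod 2 .
\]
Since $\varphi$ is an odd-coloring, the right-hand sum is congruent to $r/2$ modulo $r$; in particular it differs from $r/2$ by a multiple of $r$, hence by an even number, so it is also congruent to $r/2$ modulo $2$.

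The crux — and the one place where the hypothesis $r \equiv 2 \pmod 4$ is essential — is that $r/2$ is then \emph{odd}: writing $r = 4k+2$ gives $r/2 = 2k+1$. Combining this with the display above yields
\[
I_X(i_1) + \cdots + I_X(i_r) \equiv r/2 \equiv 1 \pmod 2 ,
\]
which is exactly the condition that $X$ be an odd transversal of $A$. Thus $A$ is an odd transversal matrix, and the nontrivial direction is complete. There is no genuinely hard step here once one guesses the correct candidate for $X$; the whole argument rests on the elementary observation that a congruence modulo $r$ descends to a congruence modulo $2$, together with the fact that $r/2$ is odd precisely when $r \equiv 2 \pmod 4$. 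It is worth noting where this breaks down when $4 \mid r$: there $r/2$ is even, the same computation only produces $\sum_j I_X(i_j) \equiv 0 \pmod 2$, and the natural candidate fails — consistent with odd-colorability being strictly weaker than having an odd transversal in that regime.
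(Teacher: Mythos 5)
Your proof is correct and follows essentially the same route as the paper: both take $X$ to be the set of indices where $\varphi$ is odd and use the fact that $r/2=2k+1$ is odd when $r\equiv 2\pmod 4$ to conclude the parity condition. You merely spell out the reduction from the congruence modulo $r$ to the congruence modulo $2$ a little more explicitly than the paper does.
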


\begin{proof}
Set $r=4k+2.$ In view of Proposition \ref{pro1}, we only need to show that if
$A$ is odd-colorable, then it has an odd transversal. Let $\varphi:\left[
n\right]  \rightarrow\left[  4k+2\right]  $ be an odd-coloring of $A.$ Write
$X$ for the set of all $i\in\left[  n\right]  $ such that $\varphi\left(
i\right)  $ is odd. We shall show that $X$ is an odd transversal of $A.$
Indeed, if $a_{i_{1},\ldots,i_{r}}\neq0,$ then
\[
\varphi\left(  i_{1}\right)  +\cdots+\varphi\left(  i_{r}\right)  =2k+1\text{
}(\operatorname{mod}\text{ }4k+2).
\]
Therefore, among the numbers $\varphi\left(  i_{1}\right)  ,\ldots
,\varphi\left(  i_{r}\right)  ,$ the number of the odd ones is odd, which
implies that
\[
I_{X}\left(  i_{1}\right)  +\cdots+I_{X}\left(  i_{r}\right)  =1\text{
}(\operatorname{mod}\text{ }2),
\]
and so, $X$ is an odd transversal of $A.$\footnote{Let us note that this
argument has been used before, e.g., in the proof of Theorem 11 of
\cite{ZSWB14}.}
\end{proof}

Since Proposition \ref{pro3}\ does not cover the case $r=0$
$(\operatorname{mod}4)$,\ Zhou, Sun, Wang, and Bu stated a conjecture on p. 9
of \cite{ZSWB14}, which would imply that Proposition \ref{pro3}\ holds for any
even $r$. However, we shall construct two families of odd-colorable graphs
with no odd transversals, thereby disproving this conjecture.

\begin{proposition}
\label{pro4}Let $k$ be a positive integer. If $n\geq8k,$ then there exists a
family of odd-colorable $4k$-graphs of order $n$ with no odd transversals.
\end{proposition}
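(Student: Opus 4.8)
The plan is to reduce the non-existence of an odd transversal to a simple parity count, and then to exhibit an explicit odd-colorable gadget to which this count applies. The key observation is the double-counting identity: for any $4k$-graph $G$ on $[n]$ and any $X\subseteq[n]$,
\[
\sum_{e\in E(G)}\left\vert X\cap e\right\vert =\sum_{v\in X}\deg_{G}(v).
\]
Suppose every vertex of $G$ has even degree. Then the right-hand side is even for every $X$. On the other hand, if $X$ were an odd transversal, then each $\left\vert X\cap e\right\vert$ would be odd, so the left-hand side would be congruent to $\left\vert E(G)\right\vert\pmod 2$. Hence, if in addition $\left\vert E(G)\right\vert$ is odd, no $X$ can be an odd transversal. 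So it suffices to construct an odd-colorable $4k$-graph in which every vertex has even degree and the number of edges is odd.

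The smallest such gadget is a ``triangle'' of three edges. Fix three pairwise disjoint sets $P,Q,R\subseteq[n]$, each of size $2k$, and set $A:=P\cup Q$, $B:=P\cup R$, $C:=Q\cup R$; each of these is a $4k$-set. Let $G$ be the $4k$-graph with edge set $\{A,B,C\}$ and with the remaining $n-6k$ vertices isolated; this requires only $n\geq 6k$, so in particular every $n\geq 8k$ is admissible. Then $G$ has exactly three edges (an odd number), every vertex of $A\cup B\cup C$ lies in exactly two edges, and every other vertex has degree $0$; thus all degrees are even. By the previous paragraph, $G$ has no odd transversal.

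It remains to verify that $G$ is odd-colorable, that is, to produce $\varphi:[n]\rightarrow[4k]$ with every edge-sum congruent to $r/2=2k\pmod{4k}$. Writing $p,q,s$ for the $\varphi$-sums over $P,Q,R$, the three edge conditions read $p+q\equiv p+s\equiv q+s\equiv 2k\pmod{4k}$, which force $p\equiv q\equiv s\pmod{4k}$ together with $2p\equiv 2k\pmod{4k}$; the choice $p\equiv q\equiv s\equiv k\pmod{4k}$ solves this. Such sums are easy to realize within the color range $\{1,\ldots,4k\}$: in each of $P,Q,R$ give one vertex the color $k$ and the remaining $2k-1$ vertices the color $4k\equiv 0\pmod{4k}$, and color all isolated vertices $4k$. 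Each of $A,B,C$ then has color-sum $\equiv k+k=2k\pmod{4k}$, so $\varphi$ is an odd-coloring and $G$ is odd-colorable.

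Finally, a whole family is obtained by varying the unused vertices: since an odd transversal of a disjoint union restricts to an odd transversal of each component, one may attach to the triangle any number of further disjoint gadgets (additional triangles when space permits, isolated vertices otherwise) on the remaining $n-6k$ vertices, each choice yielding a distinct odd-colorable $4k$-graph of order $n$ with no odd transversal. The conceptual point --- and the reason the case $r\equiv 0\pmod 4$ genuinely differs from Proposition \ref{pro3} --- is that here the set of odd-colored vertices meets each edge in an \emph{even} number of vertices, so the natural candidate for an odd transversal fails. The main work is therefore only to confirm that the even-degree, odd-edge obstruction can coexist with odd-colorability, which the triangle gadget does comfortably on $6k\leq 8k$ vertices.
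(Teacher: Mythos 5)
Your proof is correct, and it takes a genuinely different route from the paper's. The paper builds a dense graph: partition $[n]$ into $A\cup B$ with $\left\vert A\right\vert ,\left\vert B\right\vert \geq 4k$ and take \emph{all} $4k$-sets meeting each part in exactly $2k$ vertices; odd-colorability comes from the coloring $\varphi\equiv 4k$ on $A$, $\varphi\equiv 1$ on $B$, and the non-existence of an odd transversal is ruled out by an ad hoc case analysis on the sizes $\left\vert X\cap A\right\vert$ and $\left\vert X\cap B\right\vert$, which needs the abundance of edges (hence $n\geq 8k$). You instead isolate a clean, reusable parity obstruction --- if every degree is even, then $\sum_{e}\left\vert X\cap e\right\vert=\sum_{v\in X}\deg(v)$ forces $\left\vert E(G)\right\vert$ to be even whenever an odd transversal exists --- and apply it to a minimal three-edge ``triangle'' gadget on $6k$ vertices. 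Both arguments are complete; I checked your color arithmetic ($p\equiv q\equiv s\equiv k$ gives edge-sums $2k\pmod{4k}$, realizable with colors $k$ and $4k$) and your handling of the family via disjoint components, which is fine since an odd transversal restricts to every component. Your lemma buys generality and economy: it explains in one line why a large class of hypergraphs has no odd transversal, works for any $r$, and the solvability of $2p\equiv r/2\pmod r$ exactly when $4\mid r$ makes transparent why the construction is confined to $r\equiv 0\pmod 4$, in pleasing consistency with Proposition \ref{pro3}. What the paper's denser construction buys in exchange is a template that is then modified in Proposition \ref{pro5} to produce $3$-chromatic odd-colorable examples, which your sparse gadget (being $2$-colorable) does not directly yield.
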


\begin{proof}
Let $n\geq8k.$ Partition $\left[  n\right]  $ into two sets $A$ and $B$ so
that $\left\vert A\right\vert \geq4k$ and $\left\vert B\right\vert \geq4k.$
Define the $4k$-graph $G$ by setting $V\left(  G\right)  :=\left[  n\right]  $
and letting%
\[
E\left(  G\right)  :=\{e:e\subset\left[  n\right]  ,\text{ }\left\vert e\cap
A\right\vert =2k\text{ and }\left\vert e\cap B\right\vert =2k\}.\text{ }%
\]
To see that $G$ is odd-colorable, define a map $\varphi:\left[  n\right]
\rightarrow\left[  4k\right]  $ by letting
\[
\varphi\left(  i\right)  :=\left\{
\begin{array}
[c]{cc}%
4k, & \text{if }i\in A\text{; }\\
1, & \text{if }i\in B\text{.}%
\end{array}
\right.
\]
For every edge $\left\{  i_{1},\ldots,i_{4k}\right\}  \in E\left(  G\right)
,$ we see that
\[
\varphi\left(  i_{1}\right)  +\cdots+\varphi\left(  i_{4k}\right)  =2k\text{
}(\operatorname{mod}\text{ }4k)\text{;}%
\]
thus, $G$ is odd-colorable.

Assume for a contradiction that $X\subset\left[  n\right]  $ is an odd
transversal. Then either $\left\vert X\cap A\right\vert >2k$ or $\left\vert
X\cap B\right\vert >2k,$ for otherwise there are $e_{1}\subset A\backslash X$
and $e_{2}\subset B\backslash X$ such that $\left\vert e_{1}\right\vert
=\left\vert e_{2}\right\vert =2k$; hence, $e_{1}\cup e_{2}\in E\left(
G\right)  $ and $\left(  e_{1}\cup e_{2}\right)  \cap X=\varnothing,$
contradicting that $X$ is a transversal.

Assume by symmetry that $\left\vert X\cap A\right\vert >2k.$ Then $\left\vert
X\cap B\right\vert <2k,$ for otherwise there are $e_{1}\subset X\cap A$ and
$e_{2}\subset X\cap B$ such that $\left\vert e_{1}\right\vert =\left\vert
e_{2}\right\vert =2k$; hence, $e_{1}\cup e_{2}\in E\left(  G\right)  $ and
$\left(  e_{1}\cup e_{2}\right)  \cap X=4k,$ contradicting that $X$ is an odd transversal.

Since $\left\vert X\cap A\right\vert >2k$ and $\left\vert X\cap B\right\vert
<2k$, there are $e_{1}\subset X\cap A$ and $e_{2}\subset B\backslash X$ such
that $\left\vert e_{1}\right\vert =\left\vert e_{2}\right\vert =2k$; hence,
$e_{1}\cup e_{2}\in E\left(  G\right)  $ and $\left(  e_{1}\cup e_{2}\right)
\cap X=2k,$ contradicting that $X$ is an odd transversal. Therefore, $G$ has
no odd transversals.
\end{proof}

To construct another family of odd-colorable but not odd-transversal graphs,
note that if $r$ is even and $G$ is odd-colorable, then $\chi\left(  G\right)
\leq r,$ whereas if $G$ is odd-transversal, then $\chi\left(  G\right)  =2$.
The graphs constructed in Proposition \ref{pro4} are also $2$-chromatic, but
odd-colorable graphs in general may have higher chromatic number, as shown below:

\begin{proposition}
\label{pro5}Let $k$ be a positive integer. If $n\geq16k,$ then there exists a
family of $3$-chromatic odd-colorable $4k$-graphs of order $n$.
\end{proposition}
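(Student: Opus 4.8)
The plan is to adapt the explicit construction of Proposition~\ref{pro4}, but to split the vertex set into three parts rather than two so as to force the chromatic number up to $3$ while retaining an odd-coloring. First I would partition $\left[n\right]$ into three sets $A$, $B$, $C$, each of size at least $4k$ (possible since $n\geq16k$ gives room for three blocks of size at least $4k$ and a little slack). The natural idea is to let the edges be exactly those $4k$-subsets that meet two of the three blocks in $2k$ vertices each and miss the third entirely; concretely, $E\left(G\right)$ consists of all $e$ with $\left|e\cap A\right|=\left|e\cap B\right|=2k$, or $\left|e\cap B\right|=\left|e\cap C\right|=2k$, or $\left|e\cap A\right|=\left|e\cap C\right|=2k$. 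This makes $G$ a union of three copies of the Proposition~\ref{pro4} construction, one on each pair of blocks.

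Next I would exhibit the odd-coloring. Reusing the coloring from Proposition~\ref{pro4}, set $\varphi\left(i\right)=4k$ for $i\in A$ and $\varphi\left(i\right)=1$ for $i\in B$; the remaining block $C$ must be assigned a value that is compatible simultaneously with the $A$--$B$, $B$--$C$, and $A$--$C$ edge types. For an edge split $2k$/$2k$ between two blocks, the color sum must equal $2k\pmod{4k}$. Testing the constraint on each pair shows that $C$ needs a value $c$ with $2k\cdot 4k + 2k\cdot c \equiv 2k$, $2k\cdot 1 + 2k\cdot c\equiv 2k$, and $2k\cdot 4k+2k\cdot 1\equiv 2k$ modulo $4k$; since $2k\cdot 4k\equiv 0$ and $2k\cdot1 = 2k$ already satisfy the $A$--$C$ pair, the choice $\varphi\left(i\right):=2k$ for $i\in C$ should resolve all three, because $2k\cdot 2k\equiv 0\pmod{4k}$. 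I expect this verification to be a short modular-arithmetic check carried out case by case over the three edge types.

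I would then prove $\chi\left(G\right)=3$. The upper bound $\chi\left(G\right)\leq r$ is automatic from odd-colorability as noted before the statement, but here I want the sharp value $3$, so I would argue both bounds directly. For $\chi\left(G\right)\leq 3$, the partition $\left\{A,B,C\right\}$ itself is a proper $3$-coloring: every edge meets exactly two blocks, hence meets at least two color classes, which is precisely the defining condition of a $3$-chromatic coloring for hypergraphs. For the lower bound $\chi\left(G\right)\geq 3$, I would suppose a partition of $\left[n\right]$ into two color classes $U\cup W$ and seek an edge inside one class; since each of $A$, $B$, $C$ has at least $4k$ vertices, by pigeonhole one of the two classes, say $U$, contains at least $2k$ vertices from at least two of the three blocks, and those $4k$ vertices form an edge monochromatic in $U$. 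This shows no $2$-coloring is proper.

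The main obstacle is the lower bound $\chi\left(G\right)\geq 3$: I have to verify that no matter how the vertices are split into two classes, some edge falls entirely inside one class, and this requires the three blocks to be large enough that two of them simultaneously deposit $2k$ vertices into the same class. The pigeonhole step needs care because an adversarial $2$-coloring could try to balance each block individually; the key observation is that across three blocks of size $\geq 4k$ there are six ``block-side'' pieces, and some color class must capture at least $2k$ vertices from two distinct blocks. Making this counting airtight—confirming the size threshold $n\geq16k$ genuinely suffices and handling the boundary cases where a block is split exactly evenly—is where I would spend the most effort, whereas the odd-coloring check and the upper bound are routine.
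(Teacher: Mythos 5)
Your construction fails at the step you dismissed as ``a short modular-arithmetic check'': the graph you build is not odd-colorable. With $\varphi\equiv 4k$ on $A$, $\varphi\equiv 1$ on $B$ and $\varphi\equiv c$ on $C$, a $B$--$C$ edge sums to $2k\cdot 1+2k\cdot c\equiv 2k+2kc\pmod{4k}$, which forces $2kc\equiv 0$, i.e.\ $c$ even; but an $A$--$C$ edge sums to $2k\cdot 4k+2k\cdot c\equiv 2kc$, which forces $2kc\equiv 2k$, i.e.\ $c$ odd. Your choice $c=2k$ gives the $A$--$C$ edges color sum $\equiv 0\not\equiv 2k\pmod{4k}$. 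Moreover, the obstruction is not an artifact of your particular constants: since any vertex of a block can be swapped for any other vertex of the same block inside an edge, every odd-coloring of your graph must be constant on blocks, say with values $a,b,c$; the three edge types then require $a+b$, $b+c$, $a+c$ all to be odd, and the sum of these three odd numbers is the even number $2(a+b+c)$ --- impossible. So the fully symmetric ``three copies of Proposition \ref{pro4}'' graph can never be odd-colorable, no matter how the coloring is chosen.

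The paper escapes exactly this parity trap by making the edges between one pair of blocks split asymmetrically: it keeps $2k$/$2k$ edges between $A$ and $C$ and between $B$ and $C$, but between $A$ and $B$ it uses edges with splits $k$/$3k$ and $3k$/$k$ (hence the larger blocks $\left\vert A\right\vert\geq 6k$, $\left\vert B\right\vert\geq 6k$). With colors $1$ on $A$, $4k-1$ on $B$, and $4k$ on $C$, all four edge types sum to $2k\pmod{4k}$, while the presence of edges between each pair of blocks still forces $\chi(G)=3$ by essentially the pigeonhole argument you describe. Your upper bound $\chi(G)\leq 3$ and your lower-bound counting are sound as far as they go, but they are attached to a graph that cannot satisfy the other half of the statement.
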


\begin{proof}
Let $n\geq16k$ and let partition $\left[  n\right]  $ into three sets $A,$
$B,$ and $C$ so that $\left\vert A\right\vert \geq6k,$ $\left\vert
B\right\vert \geq6k$ and $\left\vert C\right\vert \geq4k.$ First, define four
families of $4k$-subsets of $\left[  n\right]  :$
\begin{align*}
E_{1}  &  :=\{e:e\subset\left[  n\right]  ,\text{ }\left\vert e\cap
A\right\vert =2k\text{ and }\left\vert e\cap C\right\vert =2k\},\\
E_{2}  &  :=\{e:e\subset\left[  n\right]  ,\text{ }\left\vert e\cap
B\right\vert =2k\text{ and }\left\vert e\cap C\right\vert =2k\},\\
E_{3}  &  :=\{e:e\subset\left[  n\right]  ,\text{ }\left\vert e\cap
A\right\vert =k\text{ and }\left\vert e\cap B\right\vert =3k\},\\
E_{4}  &  :=\{e:e\subset\left[  n\right]  ,\text{ }\left\vert e\cap
A\right\vert =3k\text{ and }\left\vert e\cap B\right\vert =k\}.\text{ }%
\end{align*}
Now, define a $4k$-graph $G$ by setting $V\left(  G\right)  :=\left[
n\right]  $ and letting $E\left(  G\right)  :=E_{1}\cup E_{2}\cup E_{3}\cup
E_{4}.$

To see that $G$ is odd-colorable, define the map $\varphi:\left[  n\right]
\rightarrow\left[  4k\right]  $ by letting
\[
\varphi\left(  i\right)  :=\left\{
\begin{array}
[c]{ll}%
1 & \text{if }i\in A\text{; }\\
4k-1 & \text{if }i\in B\text{;}\\
4k & \text{if }i\in C\text{.}%
\end{array}
\right.
\]
Let $e\in$ $E\left(  G\right)  .$ We shall check that $\varphi\left(
i\right)  $ is an odd-coloring. Indeed, if $e\in$ $E_{1},$ then
\[
\sum_{i\in e}\varphi\left(  i\right)  =4k\cdot2k+2k=2k\text{ }%
(\operatorname{mod}4k),
\]
and if $e\in E_{2},$ then
\[
\sum_{i\in e}\varphi\left(  i\right)  =4k\cdot2k+2k\left(  4k-1\right)
=2k\text{ }(\operatorname{mod}4k).
\]
If $e\in E_{3},$ then
\[
\sum_{i\in e}\varphi\left(  i\right)  =k+3k\left(  4k-1\right)  =2k\text{
}(\operatorname{mod}4k),
\]
and finally, if $e\in E_{4},$ then
\[
\sum_{i\in e}\varphi\left(  i\right)  =3k+k\left(  4k-1\right)  =2k\text{
}(\operatorname{mod}4k).
\]
Hence if $\left\{  i_{1},\ldots,i_{4k}\right\}  \in E\left(  G\right)  ,$ then
$\varphi\left(  i_{1}\right)  +\cdots+\varphi\left(  i_{4k}\right)  =2k$
$(\operatorname{mod}$ $4k)$; thus, $G$ is odd-colorable.

Since the classes $A,$ $B,$ and $C$ do not span edges, $\chi\left(  G\right)
\leq3.$ Assume for a contradiction that $\chi\left(  G\right)  =2$, and let
$X$ and $Y$ be the two color classes of $G$. Clearly, either $\left\vert A\cap
X\right\vert \geq3k$ or $\left\vert A\cap Y\right\vert \geq3k.$ By symmetry,
assume\ that $\left\vert A\cap X\right\vert \geq3k.$ Therefore, $\left\vert
Y\cap B\right\vert >5k,$ for otherwise $X$ would contain an edge from $E_{4}$.
Clearly, either $\left\vert C\cap X\right\vert \geq2k$ or $\left\vert C\cap
Y\right\vert \geq2k.$ If $\left\vert A\cap X\right\vert \geq2k,$ then $X$
contains an edge from $E_{1}$; if $\left\vert A\cap Y\right\vert \geq2k,$ then
$Y$ contains an edge from $E_{2}$. This contradiction completes the proof.
\end{proof}

It is not clear how large may the chromatic number of odd-colorable graphs be,
so we would like to raise a question:

\begin{question}
Let $r=0$ $(\operatorname{mod}$ $4).$ What is the maximum chromatic number of
an odd-colorable $r$-graph of order $n.$
\end{question}

\section{\label{OE}Odd-colorings and eigenvalues}

In this section we discuss matrices and graphs with symmetric spectrum. The
first results for $r$-graphs were given in \cite{Nik14}, Theorem 8.9 and
Proposition 8.10 that read as:\medskip

\emph{If }$G$\emph{ is an }$r$\emph{-graph and }$-\rho\left(  G\right)
$\emph{ is an eigenvalue of }$G,$\emph{ then }$r$\emph{ is even}.

\emph{If }$r$\emph{ is even and }$G$\emph{ is an }$r$\emph{-graph with an odd
transversal, then }$G$ \emph{has symmetric spectrum.}\medskip

These facts extend to symmetric nonnegative $r$-matrices as well. Here is our
generalization of Proposition \ref{pro1} from the Introduction.

\begin{theorem}
\label{thOC} Let $r\geq2$ and $r$ be even. If an $r$-matrix $A$ is
odd-colorable, then the spectrum of $A$ is symmetric and geosymmetric.
\end{theorem}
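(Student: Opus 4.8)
The plan is to exhibit explicitly the unitary diagonal operator demanded by geosymmetry, built from the odd-coloring $\varphi$ together with a primitive $r$-th root of unity, and then to observe that the very same device realizes $-A$ as a diagonal similarity transform of $A$, which upgrades geosymmetry to full symmetry of the spectrum (with algebraic multiplicities). Concretely, I would set $\zeta := e^{2\pi i/r}$ and, for each $k\in[n]$, put $d_k := \zeta^{\varphi(k)}$; let $\Phi$ be the diagonal operator $\mathbf{x}\mapsto(d_1 x_1,\ldots,d_n x_n)$. Since $|d_k|=1$, the operator $\Phi$ is unitary, so it is a legitimate witness for the definition of geosymmetry. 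The two facts that carry the whole argument are $\zeta^r=1$ (whence $d_k^r=1$ for every $k$) and $\zeta^{r/2}=e^{\pi i}=-1$, the latter being exactly where the evenness of $r$ enters, as it makes $r/2$ an integer.

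First I would verify geosymmetry directly at the level of eigenvectors. Suppose $\lambda$ is an eigenvalue with eigenvector $\mathbf{x}$, and set $\mathbf{y}:=\Phi(\mathbf{x})$. Substituting $\mathbf{y}$ into the right-hand side of the eigenvalue equation~\eqref{alg} and factoring the coefficient $d_{i_2}\cdots d_{i_r}$ off each nonzero term, the odd-coloring condition $\varphi(k)+\varphi(i_2)+\cdots+\varphi(i_r)\equiv r/2\pmod{r}$ forces $d_{i_2}\cdots d_{i_r}=\zeta^{r/2-\varphi(k)}=-d_k^{-1}$ on every term with $a_{k,i_2,\ldots,i_r}\neq 0$. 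This pulls the common factor $-d_k^{-1}$ out of the sum and reduces the right-hand side to $-d_k^{-1}\lambda x_k^{r-1}$; comparing with $-\lambda y_k^{r-1}=-\lambda d_k^{r-1}x_k^{r-1}$ and using $d_k^{r-1}=d_k^{-1}$ (because $d_k^r=1$) shows that $\mathbf{y}$ is an eigenvector of $A$ to $-\lambda$. This is precisely the assertion that the spectrum is geosymmetric.

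Second, to obtain symmetry of the spectrum including multiplicities, I would run the identical computation one level up, on the matrix entries rather than on a fixed eigenvector. Define $B$ by $b_{k,i_2,\ldots,i_r}:=d_k^{-(r-1)}d_{i_2}\cdots d_{i_r}\,a_{k,i_2,\ldots,i_r}$; using $d_k^{-(r-1)}=d_k$ and the coloring identity exactly as before gives $d_k^{-(r-1)}d_{i_2}\cdots d_{i_r}=\zeta^{r/2}=-1$ on the support of $A$, so $B=-A$. Thus $-A$ is obtained from $A$ by a diagonal similarity, and invoking the invariance of the hypermatrix characteristic polynomial under such a diagonal similarity yields $\phi_{-A}=\phi_A$, i.e. the spectra of $A$ and $-A$ coincide as multisets, which is the definition of a symmetric spectrum.

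The computations themselves are routine once the operator $\Phi$ is chosen; the one genuinely external ingredient — and the step I would treat most carefully — is the invariance of the characteristic polynomial under diagonal similarity of cubical matrices, since it is precisely this that promotes the eigenvector-level (set) symmetry to equality of algebraic multiplicities. If one wished to avoid appealing to that invariance, the eigenvector argument already delivers the set-level symmetry and the full geosymmetry, but the multiplicity claim appears to need the entrywise identity $B=-A$ rather than the eigenvector substitution alone.
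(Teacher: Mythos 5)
Your proposal is correct and follows essentially the same route as the paper: the same unitary diagonal operator built from $e^{2\pi i/r}$ and $\varphi$ gives geosymmetry by direct substitution into the eigenvalue equations, and the same entrywise identity $B=-A$ combined with the spectrum-invariance of diagonal similarity (Shao's theorem, which the paper cites for exactly this step) gives symmetry with multiplicities. The external ingredient you flag is precisely the result of Shao invoked in the paper's proof, so nothing is missing.
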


\begin{proof}
Let $n$ be the order of $A$ and $\varphi:\left[  n\right]  \rightarrow\left[
r\right]  $ be an odd coloring of $A,$ i.e., if $a_{j_{1},\ldots,j_{r}}\neq0,$
then
\begin{equation}
\varphi\left(  j_{1}\right)  +\cdots+\varphi\left(  j_{r}\right)  =r/2\text{
}(\operatorname{mod}\text{ }r). \label{fi}%
\end{equation}

First, we show that the spectrum of $A$ is geosymmetric. Define a map
$\Phi:\mathbb{C}^{n}\rightarrow\mathbb{C}^{n}$ by letting $\Phi\left(
x_{1},\ldots,x_{n}\right)  :=\left(  y_{1},\ldots,y_{n}\right)  ,$ where for
each $k\in\left[  n\right]  ,$ we set $y_{k}:=e^{2\varphi\left(  k\right)  \pi
i/r}x_{k}$. Clearly, $\Phi$ is a unitary diagonal operator. Let $\lambda$ be
an eigenvalue of $A$ with eigenvector $\left(  x_{1},\ldots,x_{n}\right)  $.
We see that
\begin{align*}
\sum_{j_{2},\ldots,j_{r}}a_{k,j_{2},\ldots,j_{r}}y_{j_{2}}\cdots y_{j_{r}}  &
=\sum_{j_{2},\ldots,j_{r}}a_{k,j_{2},\ldots,j_{r}}x_{j_{2}}\cdots x_{j_{r}%
}e^{\pi i-2\varphi\left(  k\right)  \pi i/r}=\\
&  =-\sum_{j_{2},\ldots,j_{r}}a_{k,j_{2},\ldots,j_{r}}x_{j_{2}}\cdots
x_{j_{r}}e^{2\varphi\left(  k\right)  \left(  r-1\right)  \pi i/r}\\
&  =-\lambda x_{k}^{r-1}e^{2\varphi\left(  k\right)  \left(  r-1\right)  \pi
i/r}=-\lambda y_{k}^{r-1}.
\end{align*}
Therefore, $-\lambda$ is an eigenvalue of $A$ with eigenvector $\left(
y_{1},\ldots,y_{n}\right)  $, so the spectrum of $A$ is geosymmetric.

To finish the proof, note that in \cite{Sha13}, Shao showed that if $\left(
z_{1},\ldots,z_{n}\right)  $ is a vector with nonzero entries and a matrix $B$
is defined as
\[
b_{j_{1},\ldots,j_{r}}=z_{j_{1}}^{-r}a_{j_{1},\ldots,j_{r}}z_{j_{1}}\cdots
z_{j_{r}},\text{ }\left(  j_{1},\ldots,j_{r}\right)  \in\left[  n\right]
^{r},
\]
then $A$ and $B$ have the same spectrum. Setting $z_{k}:=e^{2\varphi\left(
k\right)  \pi i/r}$ for each $k\in\left[  n\right]  ,$ in view of (\ref{fi}),
we find that $B=-A$; thus the spectrum of $A$ is symmetric, completing the proof.
\end{proof}

Our next goal is to generalize Theorem \ref{th1} to symmetric hypermatrices.
The result that we shall state comes as a consequence of several results of
Yang and Yang \cite{YaYa11}, which for convenience we combine into one
theorem. Recall that in \cite{YaYa11}, Yang and Yang gave numerous results
about weakly irreducible nonnegative matrices with more than one eigenvalue of
modulus equal to the spectral radius. For the case of symmetric matrices,
their Theorems 3.9, 3.10, and 3.11 imply the following statement:

\begin{theorem}
\label{thY}Let $A$ be a weakly irreducible, nonnegative, symmetric $r$-matrix
of order $n.$ If $\rho\left(  A\right)  e^{i\theta}$ is an eigenvalue of $A,$
then there is a function $\varphi:\left[  n\right]  \rightarrow\left[
r\right]  $ such that if $a_{j_{1},\ldots,j_{r}}\neq0,$ then
\[
e^{2\varphi\left(  j_{1}\right)  \pi i/r}\cdots e^{2\varphi\left(
j_{r}\right)  \pi i/r}=e^{i\theta}e^{2\varphi\left(  j_{1}\right)  \pi
i}=\cdots=e^{i\theta}e^{2\varphi\left(  j_{r}\right)  \pi i}.
\]

\end{theorem}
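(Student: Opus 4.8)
The plan is to adapt the classical Perron--Frobenius argument for the peripheral spectrum of an irreducible nonnegative matrix, using only Proposition \ref{proC} and Theorem \ref{PF} in place of the general machinery of Yang and Yang. Fix an eigenvector $\mathbf{x}=(x_1,\dots,x_n)$ to $\lambda=\rho\left(A\right)e^{i\theta}$, scaled so that $\sum_k|x_k|^r=1$, and set $\mathbf{u}=(|x_1|,\dots,|x_n|)\in\mathbb{S}_r^{n-1}$.

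First I would show that $\mathbf{u}$ is a \emph{positive} eigenvector to $\rho\left(A\right)$. Taking moduli in the eigenvalue equations (\ref{alg}) and using $a_{k,i_2,\dots,i_r}\ge0$ together with the triangle inequality gives $\rho\left(A\right)u_k^{r-1}\le\sum_{i_2,\dots,i_r}a_{k,i_2,\dots,i_r}u_{i_2}\cdots u_{i_r}$ for every $k$. Multiplying the $k$-th inequality by $u_k\ge0$ and summing over $k$ yields $P_A(\mathbf{u})\ge\rho\left(A\right)$; since Proposition \ref{proC} gives $P_A(\mathbf{u})\le\eta\left(A\right)=\rho\left(A\right)$, equality holds, so $\mathbf{u}$ realizes $\eta\left(A\right)$. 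By the second part of Proposition \ref{proC}, $\mathbf{u}$ is then an $H$-eigenvector to $\rho\left(A\right)$, and by weak irreducibility and Theorem \ref{PF} it is strictly positive. In particular the phases $\psi_k:=\arg x_k$ are well defined.

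The equality just obtained means $\bigl|\sum_{i_2,\dots,i_r}a_{k,i_2,\dots,i_r}x_{i_2}\cdots x_{i_r}\bigr|=\sum_{i_2,\dots,i_r}a_{k,i_2,\dots,i_r}u_{i_2}\cdots u_{i_r}$, so for each $k$ all nonzero terms $a_{k,i_2,\dots,i_r}x_{i_2}\cdots x_{i_r}$ share one argument; matching it with $\arg(\lambda x_k^{r-1})$ gives, for every nonzero entry, $\psi_{i_2}+\cdots+\psi_{i_r}\equiv\theta+(r-1)\psi_k\pmod{2\pi}$. Here I would invoke the symmetry of $A$: since every permutation of a nonzero entry is nonzero, each index $j_m$ of an edge $\{j_1,\dots,j_r\}$ may play the role of $k$, and writing $S:=\psi_{j_1}+\cdots+\psi_{j_r}$ the relation becomes $S\equiv\theta+r\psi_{j_m}\pmod{2\pi}$ for every $m$. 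Thus $r\psi_{j_m}$ is independent of $m$ along each edge, and weak irreducibility (strong connectivity of $\mathcal{D}\left(A\right)$) propagates this to a single constant $c$ with $r\psi_k\equiv c\pmod{2\pi}$ for all $k$.

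Finally I would normalize. Replacing $\mathbf{x}$ by $e^{i\alpha}\mathbf{x}$ with $\alpha=-c/r$ --- still an eigenvector to $\lambda$, since multiplying $\mathbf{x}$ by a unit scalar $e^{i\alpha}$ scales both sides of (\ref{alg}) by $e^{i(r-1)\alpha}$ --- arranges $r\psi_k\equiv0\pmod{2\pi}$, so each $\psi_k=2\pi\varphi(k)/r$ with $\varphi(k)\in\{0,1,\dots,r-1\}$, read in $[r]$ by replacing $0$ with $r$. With $r\psi_{j_m}\equiv0$ the edge relation reads $S\equiv\theta\pmod{2\pi}$, i.e. $e^{2\varphi(j_1)\pi i/r}\cdots e^{2\varphi(j_r)\pi i/r}=e^{iS}=e^{i\theta}$ for every nonzero $a_{j_1,\dots,j_r}$; since each $\varphi(j_l)$ is an integer, $e^{i\theta}e^{2\varphi(j_l)\pi i}=e^{i\theta}$ as well, giving exactly the asserted chain of equalities. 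The main obstacle is the first step --- forcing $\mathbf{u}=|\mathbf{x}|$ to be a genuine positive Perron eigenvector rather than a mere subeigenvector --- which is where irreducibility is indispensable; routing it through Proposition \ref{proC} and Theorem \ref{PF} keeps the argument self-contained, while the remaining phase bookkeeping is routine.
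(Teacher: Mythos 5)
Your argument is correct, but it follows a genuinely different route from the paper: the paper gives no proof of Theorem \ref{thY} at all, presenting it as a direct consequence of Theorems 3.9--3.11 of Yang and Yang \cite{YaYa11}, which develop the general theory of the peripheral spectrum of weakly irreducible nonnegative tensors. What you do instead is exploit the standing hypothesis of \emph{symmetry} to run the classical Perron--Frobenius phase argument from scratch: the variational characterization $\rho(A)=\eta(A)$ of Proposition \ref{proC} lets you certify that $\mathbf{u}=|\mathbf{x}|$ maximizes $P_A$ on $\mathbb{S}_r^{n-1}$, hence is an $H$-eigenvector, hence positive by Theorem \ref{PF}; equality in the triangle inequality then forces the phase relation $S\equiv\theta+r\psi_{j_m}\pmod{2\pi}$ at every position of every nonzero entry (this is exactly where symmetry is indispensable --- without it you only get the relation for the first index), and strong connectivity of $\mathcal{D}(A)$ plus the rotation $\mathbf{x}\mapsto e^{-ic/r}\mathbf{x}$ finishes the normalization. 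Two small points deserve a sentence each if you write this up: (i) passing from the summed equality $P_A(\mathbf{u})=\rho(A)$ back to equality in each of the $n$ triangle inequalities requires dividing by $u_k$, so the positivity of $\mathbf{u}$ must be secured \emph{before} that step (your ordering is right, but the dependence should be made explicit); (ii) matching arguments requires $\lambda x_k^{r-1}\neq0$, which again rests on $\rho(A)>0$ and $u_k>0$, with the degenerate case $A=0$ being vacuous. The trade-off is clear: your proof is self-contained and more elementary, but it only covers symmetric matrices, whereas the cited Yang--Yang machinery applies to arbitrary weakly irreducible nonnegative tensors --- which is all the paper needs here, since Theorem \ref{thY} is stated only in the symmetric setting.
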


Using this result, we encounter no difficulty in generalizing Theorem
\ref{th1}:

\begin{theorem}
\label{mth}Let $A$ be a weakly irreducible, nonnegative, symmetric $r$-matrix.
If $-\rho\left(  A\right)  $ is an eigenvalue of $A,$ then $r$ is even and $A$
is odd-colorable.
\end{theorem}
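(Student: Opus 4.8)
The plan is to derive both conclusions from a single application of Theorem \ref{thY}, taking $\theta=\pi$. Since $A$ is weakly irreducible, nonnegative, and symmetric, and since $-\rho(A)=\rho(A)e^{i\pi}$ is assumed to be an eigenvalue, Theorem \ref{thY} applies verbatim. (Beforehand I would dispose of the degenerate case $\rho(A)=0$: for a weakly irreducible matrix of order at least two, strong connectivity of $\mathcal{D}(A)$ forces a nonzero entry, so $\rho(A)>0$ by Theorem \ref{PF}, and the case $\rho(A)=0$ does not genuinely arise.) The theorem then supplies a function $\varphi:[n]\rightarrow[r]$ such that, for every nonzero entry $a_{j_{1},\ldots,j_{r}}$,
\[
e^{2\varphi(j_{1})\pi i/r}\cdots e^{2\varphi(j_{r})\pi i/r}=e^{i\pi}e^{2\varphi(j_{1})\pi i}=\cdots=e^{i\pi}e^{2\varphi(j_{r})\pi i}.
\]

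The key simplification I would exploit is that each $\varphi(j_{k})$ is an integer, so $e^{2\varphi(j_{k})\pi i}=1$; hence every term on the right collapses to $e^{i\pi}=-1$, and the whole chain of equalities reduces to the single relation $e^{2(\varphi(j_{1})+\cdots+\varphi(j_{r}))\pi i/r}=-1$, valid whenever $a_{j_{1},\ldots,j_{r}}\neq0$. From this one equation both conclusions fall out. First, the left-hand side is an $r$-th root of unity, so the equation can hold only if $-1$ is itself an $r$-th root of unity, which forces $r$ to be even; this is how I would establish the first claim. Second, with $r$ even, comparing arguments in the same relation gives $2(\varphi(j_{1})+\cdots+\varphi(j_{r}))/r\equiv1\pmod 2$, that is,
\[
\varphi(j_{1})+\cdots+\varphi(j_{r})\equiv r/2\pmod r
\]
for every nonzero entry of $A$. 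This is exactly the defining condition for $\varphi$ to be an odd-coloring, so $A$ is odd-colorable, and the proof would be complete.

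With Theorem \ref{thY} available as a black box, I do not expect a real obstacle: the entire content is the observation that integrality of $\varphi$ kills the factors $e^{2\varphi(j_{k})\pi i}$, after which the surviving equation serves simultaneously as an obstruction forcing $r$ even and as the defining relation of an odd-coloring. The only points deserving care are the preliminary exclusion of $\rho(A)=0$ and the bookkeeping of the congruence modulo $r$ against the phase relation modulo $2$; the substantive analytic work has already been absorbed into the cited result of Yang and Yang.
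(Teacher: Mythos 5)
Your proposal is correct and follows essentially the same route as the paper: both invoke Theorem \ref{thY} with $\theta=\pi$ and then extract the parity of $r$ and the odd-coloring congruence from the resulting phase identity. Your version is if anything slightly cleaner, since you use the integrality of $\varphi$ to collapse the right-hand side to $-1$ directly rather than via the paper's $r$th-power step, and the preliminary remark about $\rho(A)=0$ is a harmless extra precaution.
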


\begin{proof}
Let $n$ be the order of $A$. Theorem \ref{thY} implies that there exists a
function $\varphi:\left[  n\right]  \rightarrow\left[  r\right]  $ such that
if $a_{j_{1},\ldots,j_{r}}\neq0,$ then
\[
e^{2\varphi\left(  j_{1}\right)  \pi i/r}\cdots e^{2\varphi\left(
j_{r}\right)  \pi i/r}=e^{i\pi}e^{2\varphi\left(  j_{1}\right)  \pi i}.
\]
Taking the $r$th power of both sides, we find that%
\[
\left(  \varphi\left(  j_{1}\right)  +\cdots+\varphi\left(  j_{r}\right)
\right)  2\pi=r\pi\text{ }(\operatorname{mod}\text{ }2\pi).
\]
Hence, $r$ is even and
\[
\varphi\left(  j_{1}\right)  +\cdots+\varphi\left(  j_{r}\right)  =r/2\text{
}(\operatorname{mod}\text{ }r).
\]
Therefore, $\varphi$ is an odd-coloring of $A,$ and so $A$ is odd-colorable.
\end{proof}

\begin{corollary}
\label{cor2}If $G$ is a connected graph, then the spectrum of $G$ is symmetric
if and only if $r$ is even and $G$ is odd-colorable.
\end{corollary}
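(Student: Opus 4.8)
The plan is to observe that this corollary is simply the specialization of the matrix-level results already established to the adjacency matrix of a connected graph; no genuinely new argument is needed. The key preliminary remark is that for any $r$-graph $G$ the adjacency matrix $A\left(G\right)$ is nonnegative and symmetric, and, as noted in the discussion following the definition of the digraph $\mathcal{D}\left(A\right)$, $A\left(G\right)$ is weakly irreducible if and only if $G$ is connected. Hence, when $G$ is connected, $A\left(G\right)$ is a weakly irreducible, nonnegative, symmetric $r$-matrix, so all of Theorems \ref{PF}, \ref{thOC}, and \ref{mth} are available for it.

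For the \textquotedblleft if\textquotedblright\ direction I would argue as follows. Assume $r$ is even and $G$ is odd-colorable; by definition this means $A\left(G\right)$ is odd-colorable. Theorem \ref{thOC} then applies verbatim and gives that the spectrum of $A\left(G\right)$ is symmetric (indeed geosymmetric). Since the spectrum of $G$ is by definition the spectrum of $A\left(G\right)$, this is exactly the claim.

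For the \textquotedblleft only if\textquotedblright\ direction I would proceed in two steps. Suppose the spectrum of $G$ is symmetric. By the Perron--Frobenius statement of Theorem \ref{PF}, $\rho\left(A\left(G\right)\right)=\rho\left(G\right)$ is an eigenvalue of $A\left(G\right)$. By symmetry of the spectrum, $-\rho\left(G\right)$ is then also an eigenvalue of $A\left(G\right)$. Now $A\left(G\right)$ is weakly irreducible (because $G$ is connected), nonnegative, and symmetric, so Theorem \ref{mth} applies with $-\rho\left(A\left(G\right)\right)$ an eigenvalue, yielding that $r$ is even and that $A\left(G\right)$, hence $G$, is odd-colorable.

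Because the result is assembled from Theorems \ref{PF}, \ref{thOC}, and \ref{mth}, I do not expect a substantive obstacle; the only point that requires care is confirming that connectedness of $G$ supplies precisely the weak irreducibility hypothesis needed for Theorems \ref{PF} and \ref{mth}, and that the passage between \textquotedblleft $G$ is odd-colorable\textquotedblright\ and \textquotedblleft $A\left(G\right)$ is odd-colorable\textquotedblright\ is exactly the definition. Everything else is direct quotation of the cited theorems.
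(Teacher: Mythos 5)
Your proposal is correct and is exactly the assembly the paper intends for this corollary (which it states without an explicit proof, immediately after Theorem \ref{mth}): connectedness gives weak irreducibility, Theorem \ref{PF} puts $\rho(G)$ in the spectrum so symmetry forces $-\rho(G)$ to be an eigenvalue, Theorem \ref{mth} gives the forward direction, and Theorem \ref{thOC} gives the converse. No issues.
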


Corollary \ref{cor2} completely solves the problem of Pearson and Zhang
mentioned in the introduction. However, it is possible to further strengthen
this assertion by dropping the premise for connectivity. Thus, using Theorem
\ref{mth}, we generalize Corollary \ref{cor1} as follows:

\begin{theorem}
\label{th2}If $A$ is a symmetric nonnegative $r$-matrix with symmetric
spectrum, then $A$ is odd-colorable.
\end{theorem}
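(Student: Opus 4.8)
The plan is to reduce the statement to the weakly irreducible case covered by Theorem \ref{mth}, and then to lift the conclusion from the components of $A$ up to $A$ itself. Recall from the discussion preceding \eqref{det} that a symmetric nonnegative $A$ is block diagonal in its weakly irreducible components $A_{1},\dots,A_{k}$, each of which is a nonnegative symmetric principal submatrix, and that every nonzero entry of $A$ has all its indices inside a single component. Hence if each $A_{i}$ carries an odd-coloring $\varphi_{i}:V(A_{i})\to\left[r\right]$, gluing the $\varphi_{i}$ on the disjoint index sets produces an odd-coloring of $A$; so it suffices to show that every component is odd-colorable. For a weakly irreducible nonnegative symmetric component, Theorem \ref{PF} makes $\rho(A_{i})$ an eigenvalue, Theorem \ref{mth} gives odd-colorability as soon as $-\rho(A_{i})$ is an eigenvalue, and Theorem \ref{thOC} gives the converse; thus for such a component the properties ``$A_{i}$ is odd-colorable'', ``$-\rho(A_{i})$ is an eigenvalue of $A_{i}$'', and ``the spectrum of $A_{i}$ is symmetric'' all coincide. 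We may assume $A\neq 0$, the zero matrix being trivial.

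I would argue by induction on the number of components $k$. The case $k=1$ is immediate: $A$ is weakly irreducible, so by Theorem \ref{PF} $\rho(A)$ is an eigenvalue, by symmetry of the spectrum $-\rho(A)$ is an eigenvalue, and Theorem \ref{mth} finishes. For the inductive step set $M:=\rho(A)$ and let $T$ be the set of indices $i$ with $\rho(A_{i})=M$. Since every eigenvalue of $A_{i}$ has modulus at most $\rho(A_{i})$, neither $M$ nor $-M$ can occur as an eigenvalue of a component outside $T$. Writing $\mu_{i}$ and $\nu_{i}$ for the algebraic multiplicities of $M$ and $-M$ in $\phi_{A_{i}}$, formula \eqref{det} shows that the multiplicities of $M$ and of $-M$ in $\phi_{A}$ are $\sum_{i\in T}(r-1)^{n-n_{i}}\mu_{i}$ and $\sum_{i\in T}(r-1)^{n-n_{i}}\nu_{i}$, respectively, and the symmetry of the spectrum of $A$ forces these two weighted sums to be equal.

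I expect this multiplicity-matching to be the main obstacle, and here is how I would clear it. By the equivalences above, each top component obeys a dichotomy: either $A_{i}$ is odd-colorable, so its spectrum is symmetric and $\nu_{i}=\mu_{i}$, or $A_{i}$ is not odd-colorable, in which case the contrapositive of Theorem \ref{mth} shows $-M$ is not an eigenvalue of $A_{i}$ and $\nu_{i}=0$. In either case $\nu_{i}\le\mu_{i}$, while $\mu_{i}\ge 1$ by Theorem \ref{PF}. Since the weights $(r-1)^{n-n_{i}}$ are positive and the two weighted sums coincide, equality must hold in every term, i.e. $\nu_{i}=\mu_{i}\ge 1$ for all $i\in T$. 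Thus $-M$ is an eigenvalue of each top component, and Theorem \ref{mth} makes every $A_{i}$ with $i\in T$ odd-colorable (and $r$ even). The delicate point is exactly that for $r\ge 3$ the Perron eigenvalue need not be simple, so one cannot conclude from a counting of simple eigenvalues; the inequality $\nu_{i}\le\mu_{i}$ played against the equality of weighted sums is what replaces simplicity.

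Finally I would close the induction by peeling off the top level. Let $A'$ be the block-diagonal matrix of the components outside $T$. Each top component now has symmetric spectrum, so by \eqref{det} their combined contribution to the spectrum of $A$ is a symmetric multiset; removing it from the symmetric spectrum of $A$ leaves a symmetric multiset. By \eqref{det} this remaining multiset is $(r-1)^{\,n-n'}\ge 1$ copies of the spectrum of $A'$, where $n'$ is the order of $A'$, so the spectrum of $A'$ is itself symmetric. As $A'$ has fewer components, the induction hypothesis makes all of them odd-colorable, and gluing the colorings of all components of $A$ yields an odd-coloring of $A$.
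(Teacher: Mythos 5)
Your proof is correct and follows the same overall strategy as the paper's: decompose $A$ into its weakly irreducible components, apply Theorem \ref{mth} together with Theorem \ref{thOC} to components of maximal spectral radius, and use \eqref{det} to pass the symmetry of the spectrum down to what remains. The one place where you genuinely diverge is in how the top spectral level is handled. The paper needs no multiplicity counting there: it observes that $-\rho(A)$, being an eigenvalue of $A$, must be an eigenvalue of \emph{some} component $A_{1}$, which forces $\rho(A_{1})=\rho(A)$ and lets Theorem \ref{mth} apply to that single component; $A_{1}$ is then zeroed out, \eqref{det} shows the remaining matrix still has symmetric spectrum, and the argument is iterated one component per pass. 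Your variant instead shows in one stroke that \emph{every} component of spectral radius $\rho(A)$ has $-\rho(A)$ as an eigenvalue, by playing the termwise inequality $\nu_{i}\le\mu_{i}$ (coming from the dichotomy "symmetric spectrum or $\nu_{i}=0$") against the equality of the two weighted multiplicity sums extracted from \eqref{det}. This costs an extra counting step that the paper's "locate one component containing $-\rho$" trick avoids, but it removes the whole top level at once and makes explicit a fact the paper only establishes implicitly over several passes. Both routes are valid; your explicit treatment of gluing the component colorings and of the termination of the recursion is, if anything, more careful than the paper's.
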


\begin{proof}
Assume that $A$ is weakly reducible, for Theorem \ref{mth} takes care of the
other case. Let $A_{1},\ldots,A_{k}$ be the components of $A$ and let
$\rho=\rho\left(  A\right)  .$ Since $\rho$ is an eigenvalue of $A,$ we see
that $-\rho$ also is an eigenvalue of $A$; hence, $-\rho$ is eigenvalue of a
component of $A.$ Without loss of generality we assume that $-\rho$ is an
eigenvalue of $A_{1}.$ Thus, $\rho\left(  A_{1}\right)  =\rho,$ and Theorem
\ref{mth}, together with Theorem \ref{thOC}, implies that the spectrum of
$A_{1}$ is symmetric.

Further, zero all entries of $A_{1}$ and write $A^{\prime}$ for the resulting
$r$-matrix. Using equation (\ref{det}), it is not hard to see that $A^{\prime
}$ also has a symmetric spectrum. Clearly, $A_{2},\ldots,A_{k}$ are components
of $A^{\prime},$ and all other components of $A^{\prime}$ are zero diagonal
entries. Iterating this argument, we end up with a zero matrix, after finding
that all components of $A$ have symmetric spectrum. Hence, all components of
$A$ are odd-colorable, and so is $A$.
\end{proof}

A similar theorem holds also for matrices with geosymmetric spectrum; we omit
its proof.

\begin{theorem}
If $A$ is a symmetric nonnegative $r$-matrix with geosymmetric spectrum, then
$A$ is odd-colorable.
\end{theorem}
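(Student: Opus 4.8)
The plan is to mirror the proof of Theorem \ref{th2}, but to exploit the extra structure of geosymmetry: the diagonal operator $\Phi$ witnessing it respects the component decomposition of $A$, so—unlike in the symmetric-spectrum case—no iterative peeling of components is needed. First I would decompose $A$ into its weakly irreducible components $A_1,\ldots,A_k$, living on coordinate blocks $V_1,\ldots,V_k$, and reduce to showing that each $A_i$ is odd-colorable. This reduction is routine: because $A$ is symmetric, every nonzero entry $a_{i_1,\ldots,i_r}$ has all of its indices in a single component, so odd-colorings $\varphi_i\colon V_i\to[r]$ of the components paste together into an odd-coloring of $A$ (isolated vertices and zero diagonal entries carry no nonzero entries and may be colored arbitrarily).

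The core step is to show that $-\rho(A_i)$ is an eigenvalue of each component $A_i$; write $\rho_i:=\rho(A_i)$. By Theorem \ref{PF}, $\rho_i$ is an eigenvalue of the weakly irreducible matrix $A_i$ with a positive eigenvector $\mathbf{v}$ on $V_i$. I would pad $\mathbf{v}$ by zeros to a vector $\tilde{\mathbf{v}}$ on $[n]$ and verify, using the block-diagonal structure, that $\tilde{\mathbf{v}}$ is an eigenvector of $A$ for $\rho_i$ (at a coordinate $k$ outside $V_i$ every nonzero term $a_{k,j_2,\ldots,j_r}\tilde{v}_{j_2}\cdots\tilde{v}_{j_r}$ contains a zero factor, so both sides of the eigenequation vanish). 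Applying the definition of geosymmetric spectrum to the pair $(\rho_i,\tilde{\mathbf{v}})$ produces the eigenvector $\Phi(\tilde{\mathbf{v}})$ of $A$ for $-\rho_i$. Since $\Phi$ is diagonal and $\mathbf{v}$ is positive, $\Phi(\tilde{\mathbf{v}})$ is supported exactly on $V_i$ and is nonzero throughout $V_i$; restricting the eigenvalue equations back to $V_i$ then shows that $-\rho_i$ is an eigenvalue of $A_i$.

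With $-\rho_i=-\rho(A_i)$ an eigenvalue of the weakly irreducible, nonnegative, symmetric matrix $A_i$, Theorem \ref{mth} applies verbatim and gives that $r$ is even and $A_i$ is odd-colorable. Assembling the component odd-colorings then finishes the proof; $r$ is forced to be even by any nontrivial component, which exists whenever $A\neq0$.

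The main obstacle is the bookkeeping in the padding/restriction step: one must check carefully that zero-padding a component eigenvector yields a genuine eigenvector of $A$, and—crucially—that the diagonality of $\Phi$ keeps $\Phi(\tilde{\mathbf{v}})$ supported on the same block $V_i$, so that its restriction is again an eigenvector of $A_i$. This is exactly where geosymmetry buys more than plain symmetry of the spectrum: symmetric spectrum only guarantees that $-\rho$ is an eigenvalue of \emph{some} component, which is what forces the inductive argument of Theorem \ref{th2}, whereas the explicit diagonal $\Phi$ transfers the sign flip into \emph{every} component simultaneously.
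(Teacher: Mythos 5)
Your argument is correct. Note that the paper itself omits the proof of this theorem, saying only that it is ``similar'' to that of Theorem \ref{th2}, so the relevant comparison is with that template. The proof of Theorem \ref{th2} locates $-\rho(A)$ in \emph{some} component, applies Theorem \ref{mth} there, and then must iterate: it deletes that component and uses the factorization (\ref{det}) of the characteristic polynomial to argue that the remaining matrix still has symmetric spectrum. You avoid the iteration entirely: zero-padding the positive Perron vector of each component $A_i$ gives a genuine eigenvector of $A$ to $\rho(A_i)$ (the block structure makes the eigenequations outside $V_i$ vanish identically), the diagonal operator $\Phi$ carries it to an eigenvector of $A$ to $-\rho(A_i)$ with exactly the same support, and restriction shows $-\rho(A_i)$ is an eigenvalue of $A_i$ itself, so Theorem \ref{mth} applies to every component at once. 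This is arguably the \emph{right} route here, not merely an alternative: it is not obvious that the deletion step of Theorem \ref{th2} preserves geosymmetry (the paper even leaves open whether a geosymmetric spectrum is symmetric for $r\geq 3$), so the iterative argument does not transfer verbatim, whereas your use of the explicit $\Phi$ does. The supporting details check out: a nonzero entry of a symmetric matrix has all its indices in a single component, so the padding, restriction, and pasting of the component odd-colorings are all legitimate; and the degenerate case $A=0$, where ``$r$ is even'' cannot be extracted, is a caveat you correctly flag and which afflicts Theorem \ref{th2} equally.
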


Note that the conclusion of Theorem \ref{mth} may fail for non-symmetric
matrices. For instance, let $A$ be the $3$-matrix of order six such that
\[
a_{1,2,3}=a_{2,3,4}=a_{3,4,5}=a_{4,5,6}=a_{5,6,1}=a_{6,1,2}=1,
\]
and all other entries are zero. The six eigenequations of $A$ are
\[
\lambda x_{1}^{2}=x_{2}x_{3},\text{ }\lambda x_{2}^{2}=x_{3}x_{4},\text{
}\lambda x_{3}^{2}=x_{4}x_{5},\text{ }\lambda x_{4}^{2}=x_{5}x_{6},\text{
}\lambda x_{5}^{2}=x_{6}x_{1},\text{ }\lambda x_{6}^{2}=x_{1}x_{2}.
\]
It is not hard to see that if $\lambda\neq0$, then $\lambda^{6}=1,$ and so the
spectral radius of $A$ is $1.$ Define the vector $\left(  y_{1},\ldots
,y_{6}\right)  ,$ by setting $y_{k}:=e^{2k\pi i/6}$ for each $k\in\left\{
1,\ldots,6\right\}  .$ We see that
\begin{align*}
y_{2}y_{3}  &  =e^{4\pi i/6}e^{6\pi i/6}=-y_{1}^{2},\text{ \ }y_{3}%
y_{4}=e^{6\pi i/6}e^{8\pi i/6}=-y_{2}^{2},\text{ \ }y_{4}y_{5}=e^{8\pi
i/6}e^{10\pi i/6}=-y_{3}^{2},\\
y_{5}y_{6}  &  =e^{10\pi i/6}e^{12\pi i/6}=-y_{4}^{2},\text{ \ }y_{6}%
y_{1}=e^{12\pi i/6}e^{2\pi i/6}=-y_{5}^{2},\text{ \ }y_{1}y_{2}=e^{2\pi
i/6}e^{4\pi i/6}=-y_{6}^{2}.
\end{align*}
Hence, $\left(  y_{1},\ldots,y_{6}\right)  $ is an eigenvector to the
eigenvalue $-1.$

Since Theorem \ref{mth} extends Theorem \ref{th1} only to symmetric
$r$-matrices, we conclude with a corresponding question:

\begin{question}
Which weakly irreducible nonnegative cubical $r$-matrices $A$ have
$-\rho\left(  A\right)  $ as an eigenvalue?
\end{question}

\subsection{Odd transversals and $H$-eigenvalues}

The symmetry of the $H$-spectrum of $r$-graphs is somewhat simpler. In
\cite{Nik14}, Theorem 8.7, we proved the following statement:\medskip

\emph{If }$G$\emph{ is a connected graph and }$-\rho\left(  G\right)  $\emph{
is an }$H$\emph{-eigenvalue of }$G,$\emph{ then }$G$\emph{ has an odd
transversal.\medskip}

The short proof of this assertion extends with minor changes to nonnegative matrices:

\begin{theorem}
\label{symH}If $A$ is a weakly irreducible, nonnegative, symmetric $r$-matrix
and $-\rho\left(  A\right)  $ is an $H$-eigenvalue of $A,$ then $r$ is even
and $A$ has an odd transversal.
\end{theorem}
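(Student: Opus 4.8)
The plan is to mirror the structure of the proof of Theorem~\ref{mth}, but to work with the $H$-eigenvector directly rather than invoking the Yang--Yang classification (Theorem~\ref{thY}), since we have the extra hypothesis that $-\rho(A)$ is achieved by a \emph{real} eigenvector. First I would let $\mathbf{x}=(x_1,\ldots,x_n)$ be a real $H$-eigenvector to $-\rho$, and pass to the vector of absolute values $\mathbf{z}:=(|x_1|,\ldots,|x_n|)$. The idea is that $\mathbf{z}$ should be an $H$-eigenvector to $+\rho$, so that Theorem~\ref{PF} (Perron--Frobenius) applies and forces $\mathbf{z}$ to be strictly positive.

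To make that precise, I would use the variational characterization from Proposition~\ref{proC}. Normalizing so that $\sum_k |x_k|^r = 1$, observe that
\[
\rho = -(-\rho)\sum_k x_k^r = -\sum_{k}\sum_{i_2,\ldots,i_r} a_{k,i_2,\ldots,i_r}\, x_k x_{i_2}\cdots x_{i_r} = -P_A(\mathbf{x}).
\]
Since $A$ is nonnegative, $P_A(\mathbf{z})\ge |P_A(\mathbf{x})| = \rho = \eta(A)$, while Proposition~\ref{proC} gives $P_A(\mathbf{z})\le\eta(A)=\rho$. Hence equality holds, $\mathbf{z}\in\mathbb{S}_r^{n-1}$ maximizes $P_A$, and by Proposition~\ref{proC} it is an $H$-eigenvector to $\rho$. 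By Theorem~\ref{PF}, $\mathbf{z}$ is strictly positive, so every $x_k\ne 0$ and we may define $\varepsilon_k:=\mathrm{sign}(x_k)\in\{+1,-1\}$.

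The crux is then to read off the odd transversal from the sign pattern. Equality in $P_A(\mathbf{z})=|P_A(\mathbf{x})|$ means that every nonzero term $a_{k,i_2,\ldots,i_r}x_k x_{i_2}\cdots x_{i_r}$ must have the \emph{same} sign, namely the sign of $P_A(\mathbf{x})=-\rho<0$. Thus for each edge-type index tuple with $a_{i_1,\ldots,i_r}\ne0$ we get $\varepsilon_{i_1}\varepsilon_{i_2}\cdots\varepsilon_{i_r}=-1$, i.e. an odd number of the $\varepsilon_{i_j}$ equal $-1$. Setting $X:=\{k\in[n]: x_k<0\}$, this says precisely that $a_{i_1,\ldots,i_r}\ne0$ implies $I_X(i_1)+\cdots+I_X(i_r)=1\ (\operatorname{mod}2)$, so $X$ is an odd transversal of $A$. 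Finally, $r$ must be even: taking $\mathbf{z}$ and the all-ones scaling shows $P_A$ is invariant under $\mathbf{z}\mapsto-\mathbf{z}$ only when $r$ is even, or more directly, the existence of a sign vector with $\prod_j\varepsilon_{i_j}=-1$ on every nonzero tuple, combined with the fact that $\mathbf{z}>0$ gives an eigenvector to $+\rho$ while $\mathbf{x}$ gives one to $-\rho$ via an odd-size sign flip, forces $r$ even by a parity count (alternatively, invoke the cited result of \cite{Nik14} that $-\rho$ being an eigenvalue already forces $r$ even). I expect the main obstacle to be justifying rigorously that \emph{all} nonzero terms share a common sign at the maximum; this is the step where I must argue that a cancellation in $P_A(\mathbf{x})$ would strictly lower $|P_A(\mathbf{x})|$ below $P_A(\mathbf{z})=\rho=\eta(A)$, contradicting maximality.
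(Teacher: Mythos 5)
Your overall strategy is the same as the paper's: pass to the vector of absolute values, squeeze it against $\eta(A)=\rho(A)$ via Proposition~\ref{proC}, invoke Theorem~\ref{PF} to get strict positivity, and read the odd transversal off the sign pattern forced by equality in the triangle inequality. However, your opening computation contains a circularity that breaks the squeeze. The identity $P_A(\mathbf{x})=-\rho\sum_k x_k^r$ is correct, but your first display replaces $\sum_k x_k^r$ by $1$, i.e.\ by $\sum_k|x_k|^r$; these agree only when $r$ is even (or all $x_k$ share a sign), and ``$r$ is even'' is one of the two conclusions you are trying to prove. If $r$ were odd and the signs mixed, you would only get $|P_A(\mathbf{x})|=\rho\,\bigl|\sum_k x_k^r\bigr|<\rho$, and then $P_A(\mathbf{z})\ge|P_A(\mathbf{x})|$ no longer meets $\eta(A)=\rho$ from below, so you cannot conclude that $\mathbf{z}$ is a maximizer. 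The paper avoids this by applying the triangle inequality to each eigen-equation separately, $\rho|x_k|^r=\bigl|\sum_{i_2,\ldots,i_r}a_{k,i_2,\ldots,i_r}x_kx_{i_2}\cdots x_{i_r}\bigr|\le\sum_{i_2,\ldots,i_r}a_{k,i_2,\ldots,i_r}|x_k||x_{i_2}|\cdots|x_{i_r}|$, and only then summing over $k$; this yields $\rho\le P_A(\mathbf{z})\le\eta(A)=\rho$ with no parity assumption. So the step you flag as the ``main obstacle'' (all nonzero terms sharing a sign at the maximum) is in fact the routine equality case of the triangle inequality; the real gap sits one line earlier.

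The second gap is that ``$r$ is even'' is never actually established. Having an odd transversal does not force $r$ to be even --- the paper notes that odd transversals exist for every $r$, e.g.\ for adjacency matrices of $r$-partite $r$-graphs --- and the ``parity count'' you sketch is not carried out. Citing \cite{Nik14} is not a way out either: the quoted statement there concerns $r$-graphs, whereas the present theorem is precisely its generalization to nonnegative symmetric matrices. The paper closes this point using the symmetry of $A$ in an essential way: from the equality condition one gets $-\mathrm{sign}(x_k^r)=\mathrm{sign}(x_kx_{i_2}\cdots x_{i_r})$ for every nonzero entry, and multiplying the $r$ relations obtained by letting each index of a nonzero entry play the distinguished role gives $(-1)^r\,\mathrm{sign}(x_{i_1}^r)\cdots\mathrm{sign}(x_{i_r}^r)=\mathrm{sign}(x_{i_1}\cdots x_{i_r})^r$, which is impossible for odd $r$, since the left-hand side would then equal $-\prod_j\mathrm{sign}(x_{i_j})$ while the right-hand side equals $+\prod_j\mathrm{sign}(x_{i_j})$. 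You need to supply an argument of this kind (or the equivalent comparison of $\varepsilon_k^{r-1}$ with $\varepsilon_k$ in the eigen-equations) before the proof is complete.
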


\begin{proof}
Let $A$ be of order $n$ and set $\rho:=\rho\left(  A\right)  .$ Suppose that
$-\rho$ is an $H$-eigenvalue of $A$ and let $\mathbf{x}:=\left(  x_{1}%
,\ldots,x_{n}\right)  $ be an $H$-eigenvector to $-\rho$ such that
$\mathbf{x}\in\mathbb{S}_{r}^{n-1}$ $.$ Clearly, the vector $\mathbf{y}%
:=\left(  \left\vert x_{1}\right\vert ,\ldots,\left\vert x_{n}\right\vert
\right)  $ also belongs to $\mathbb{S}_{r}^{n-1}.$ We have
\[
-\rho x_{k}^{r-1}=\sum_{i_{2},\ldots,i_{r}}a_{k,i_{2},\ldots,i_{r}}x_{i_{2}%
}\cdots x_{i_{r}},\ \ \ \ k=1,\ldots,n.
\]
Hence, for each $k\in\left[  n\right]  ,$ we find that
\begin{equation}
\rho\left\vert x_{k}\right\vert ^{r}=\left\vert \sum_{i_{2},\ldots,i_{r}%
}a_{k,i_{2},\ldots,i_{r}}x_{k}x_{i_{2}}\cdots x_{i_{r}}\right\vert \leq
\sum_{i_{2},\ldots,i_{r}}a_{k,i_{2},\ldots,i_{r}}\left\vert x_{k}\right\vert
|x_{i_{2}}|\cdots\left\vert x_{i_{r}}\right\vert . \label{in2}%
\end{equation}
Adding these inequalities, we get%
\[
\rho=\rho\sum_{k\in\left[  n\right]  }\left\vert x_{k}\right\vert ^{r}\leq
\sum_{k\in\left[  n\right]  }\sum_{i_{2},\ldots,i_{r}}a_{k,i_{2},\ldots,i_{r}%
}\left\vert x_{k}\right\vert |x_{i_{2}}|\cdots\left\vert x_{i_{r}}\right\vert
=P_{G}\left(  \mathbf{y}\right)  \leq\rho.
\]
Therefore $P_{G}\left(  \mathbf{y}\right)  =\rho$ and Proposition \ref{proC}
implies that $\mathbf{y}$ is a nonnegative eigenvector to $\rho,$ which by
Theorem \ref{PF} must be positive. In addition, equality holds in (\ref{in2})
for every $k\in\left[  n\right]  $; thus,
\begin{equation}
-\mathrm{sign}(x_{k}^{r})=\mathrm{sign}(x_{k}x_{i_{2}}\cdots x_{i_{r}})
\label{in3}%
\end{equation}
whenever $a_{k,i_{2},\ldots,i_{r}}\neq0$. Since $A$ is symmetric, we get
\[
\left(  -1\right)  ^{r}\mathrm{sign}(x_{i_{1}}^{r})\cdots\mathrm{sign}%
(x_{i_{r}}^{r})=\mathrm{sign}(x_{i_{1}}\cdots x_{i_{r}})^{r},
\]
whenever $a_{i_{1},\ldots,i_{r}}\neq0$, and thus $r$ is even. Therefore,
(\ref{in3}) implies that $x_{i_{1}}\cdots x_{i_{r}}<0$ whenever $a_{i_{1}%
,\ldots,i_{r}}\neq0,$ and so the set of indices of the negative entries of
$\left(  x_{1},\ldots,x_{n}\right)  $ is an odd transversal of $A.$
\end{proof}

As in Theorem \ref{th2}, we obtain the following generalization of Corollary
\ref{cor1}:

\begin{corollary}
\label{cor3}If $A$ is a symmetric nonnegative $r$-matrix with geosymmetric
$H$-spectrum, then $A$ has an odd-transversal.
\end{corollary}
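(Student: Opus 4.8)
The plan is to read off an explicit odd transversal directly from the orthogonal diagonal operator $\Phi$ witnessing the geosymmetry of the $H$-spectrum. Since $\Phi:\mathbb{R}^n\to\mathbb{R}^n$ is simultaneously orthogonal and diagonal, each of its diagonal entries $\epsilon_k$ lies in $\{+1,-1\}$; setting $S:=\{k\in[n]:\epsilon_k=-1\}$, the goal is to prove that $S$ is an odd transversal of $A$. First I would dispose of the degenerate case: a nonzero nonnegative symmetric matrix has $\eta(A)>0$ and hence $\rho(A)>0$, so if $\rho(A)=0$ then $A=0$, there are no nonzero entries, and $S$ is vacuously an odd transversal. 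Thus I may assume $\rho(A)>0$.

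Next I would pass to the weakly irreducible components $A_1,\ldots,A_k$ of $A$, living on the vertex classes $V_1,\ldots,V_k$. Because $A$ is block diagonal along this partition and every edge lies inside a single block, a set $X$ is an odd transversal of $A$ if and only if $X\cap V_j$ is an odd transversal of $A_j$ for every $j$. Hence it suffices to show that $S\cap V_j$ is an odd transversal of each $A_j$, the isolated one-vertex components being handled vacuously. So fix a component $A_j$ carrying an edge, whence $\rho_j:=\rho(A_j)>0$.

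The heart of the argument is to manufacture, for this component, an $H$-eigenvector of $A_j$ to $-\rho_j$ whose negative part is exactly $S\cap V_j$. By Proposition \ref{proC}, $\rho_j$ is an $H$-eigenvalue of $A_j$, and by Theorem \ref{PF} (using that $A_j$ is weakly irreducible) its nonnegative Perron $H$-eigenvector $\mathbf{x}^{(j)}$ is strictly positive on $V_j$. Extending $\mathbf{x}^{(j)}$ by zeros off $V_j$ yields, thanks to the block structure, an $H$-eigenvector of $A$ to $\rho_j$, which I still denote $\mathbf{x}^{(j)}$. Applying $\Phi$ and the definition of geosymmetry of the $H$-spectrum produces an $H$-eigenvector $\Phi(\mathbf{x}^{(j)})$ of $A$ to $-\rho_j$; it remains supported on $V_j$, and there its sign pattern is exactly $\epsilon$ restricted to $V_j$, so its negative entries are precisely $S\cap V_j$. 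The strict positivity of $\mathbf{x}^{(j)}$ is essential here, since it rules out spurious zeros or sign cancellations that would blur the identification of the negative set with $S\cap V_j$. Restricting to $V_j$ and again invoking the block structure, $\Phi(\mathbf{x}^{(j)})|_{V_j}$ is an $H$-eigenvector of $A_j$ to $-\rho(A_j)$.

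Finally I would invoke Theorem \ref{symH} for $A_j$, which is weakly irreducible, nonnegative, and symmetric with $-\rho(A_j)$ an $H$-eigenvalue: its proof shows not merely that an odd transversal exists but that the index set of the negative entries of any $H$-eigenvector to $-\rho(A_j)$ is an odd transversal (and that $r$ is even). Applied to the eigenvector just built, this gives that $S\cap V_j$ is an odd transversal of $A_j$. Assembling these facts over all $j$ shows that $S$ itself is an odd transversal of $A$, as required. The main obstacle is the bookkeeping that promotes the single global operator $\Phi$ into a coherent, component-by-component odd transversal; the decisive leverage is that geosymmetry supplies one $\Phi$ serving every $H$-eigenpair at once, while Theorem \ref{PF} forces each component's Perron vector to be strictly positive, so that the negative part of $\Phi(\mathbf{x}^{(j)})$ pins down $S\cap V_j$ exactly.
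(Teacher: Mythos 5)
Your proof is correct and follows essentially the route the paper intends (the paper gives no explicit proof of Corollary \ref{cor3}, only the remark that it is obtained ``as in Theorem \ref{th2}''): decompose $A$ into weakly irreducible components, use the fact that the diagonal operator $\Phi$ preserves supports to convert each component's positive Perron $H$-eigenvector into an $H$-eigenvector of that same component for $-\rho(A_j)$, and then invoke Theorem \ref{symH}. The only small caveat is that your claim that one-vertex components are ``handled vacuously'' tacitly assumes their diagonal entries vanish --- for a general nonnegative symmetric matrix a component $[c]$ with $c>0$ has no odd transversal at all, so one must note that geosymmetry applied to the eigenvector supported on that vertex forces $-c=c$, hence $c=0$; this is the same implicit assumption the paper itself makes in the proof of Theorem \ref{th2}.
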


Finally, here is a converse of Corollary \ref{cor3}, which completes the
picture for the $H$-spectrum.

\begin{theorem}
\label{thOCR} Let $r\geq2$ and $r$ be even. If an $r$-matrix $A$ has an odd
transversal, then its $H$-spectrum is symmetric and geosymmetric.
\end{theorem}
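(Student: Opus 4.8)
The plan is to mirror the proof of Theorem~\ref{thOC}, replacing the complex unitary operator built from $r$-th roots of unity by a real orthogonal operator built from the signs $\pm1$, which is exactly what an odd transversal provides. Let $X$ be an odd transversal of $A$, so that $a_{i_1,\ldots,i_r}\neq0$ forces $I_X(i_1)+\cdots+I_X(i_r)=1\ (\operatorname{mod}2)$. The natural candidate for the diagonal operator is $\Phi(x_1,\ldots,x_n):=(y_1,\ldots,y_n)$ where $y_k:=(-1)^{I_X(k)}x_k$; since each diagonal entry is $\pm1$, this $\Phi$ is an orthogonal diagonal operator on $\mathbb{R}^n$, and it maps real vectors to real vectors, which is what the $H$-eigenvalue setting requires.

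First I would verify geosymmetry of the $H$-spectrum. Suppose $\lambda$ is an $H$-eigenvalue with real eigenvector $\mathbf{x}$, so the equations~(\ref{alg}) hold. I would compute the right-hand side of~(\ref{alg}) with $\mathbf{y}=\Phi(\mathbf{x})$ substituted: for each $k$,
\[
\sum_{i_2,\ldots,i_r}a_{k,i_2,\ldots,i_r}y_{i_2}\cdots y_{i_r}=\sum_{i_2,\ldots,i_r}a_{k,i_2,\ldots,i_r}(-1)^{I_X(i_2)+\cdots+I_X(i_r)}x_{i_2}\cdots x_{i_r}.
\]
Whenever the summand is nonzero the odd-transversal condition gives $I_X(k)+I_X(i_2)+\cdots+I_X(i_r)\equiv1\ (\operatorname{mod}2)$, hence $I_X(i_2)+\cdots+I_X(i_r)\equiv1+I_X(k)\ (\operatorname{mod}2)$, so the sign factor equals $(-1)^{1+I_X(k)}=-(-1)^{I_X(k)}$. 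Pulling this out of the sum and using the eigenequation, the expression becomes $-(-1)^{I_X(k)}\lambda x_k^{r-1}$. Since $r$ is even, $(-1)^{I_X(k)}=\big((-1)^{I_X(k)}\big)^{r-1}$, so $y_k^{r-1}=(-1)^{I_X(k)(r-1)}x_k^{r-1}=(-1)^{I_X(k)}x_k^{r-1}$, and the whole right-hand side equals $-\lambda y_k^{r-1}$. Thus $\mathbf{y}$ is a real eigenvector to $-\lambda$, establishing that the $H$-spectrum is geosymmetric; symmetry of the $H$-spectrum then follows immediately, since geosymmetry of the $H$-spectrum implies symmetry of the $H$-spectrum (as noted after Question~\ref{qu1}).

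For symmetry I would invoke Shao's similarity result used in Theorem~\ref{thOC} with the real scaling vector $z_k:=(-1)^{I_X(k)}$: forming $b_{j_1,\ldots,j_r}=z_{j_1}^{-r}a_{j_1,\ldots,j_r}z_{j_1}\cdots z_{j_r}$ and using that the product of signs over a nonzero entry is $-1$ while $z_{j_1}^{-r}=1$ (as $r$ is even), one gets $B=-A$, so $A$ and $-A$ are cospectral. The main thing to check carefully is the parity bookkeeping---that the single application of the odd-transversal congruence correctly converts the sign of the $r-1$ off-diagonal factors into the $-\lambda$ on the eigenvalue, and that the evenness of $r$ is used exactly where needed (both to relate $y_k^{r-1}$ to $x_k^{r-1}$ and to kill $z_{j_1}^{-r}$). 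There is no serious obstacle here; the proof is a direct real analogue of Theorem~\ref{thOC}, and in fact one could alternatively deduce this theorem from Theorem~\ref{thOC} via Proposition~\ref{pro2}, noting that the odd-transversal eigenvector $\Phi(\mathbf{x})$ stays real.
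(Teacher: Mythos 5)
Your proof is correct and follows essentially the same route as the paper: both build the diagonal $\pm1$ orthogonal operator from the indicator of the odd transversal, verify via the parity condition that it sends an $H$-eigenvector for $\lambda$ to one for $-\lambda$ (using that $r-1$ is odd so $y_k^{r-1}=(-1)^{I_X(k)}x_k^{r-1}$), and deduce symmetry from geosymmetry since the $H$-spectrum carries no multiplicities. The additional appeal to Shao's similarity result is harmless but redundant here, as it addresses the algebraic spectrum rather than the $H$-spectrum the theorem is about.
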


\begin{proof}
Let $A$ be of order $n$ and let $X$ be an odd transversal of $A.$ Hence, if
$a_{j_{1},\ldots,j_{r}}\neq0,$ then
\[
I_{X}\left(  j_{1}\right)  +\cdots+I_{X}\left(  j_{r}\right)  =1\text{
}(\operatorname{mod}\text{ }2).
\]
We shall prove that the $H$-spectrum of $A$ is geosymmetric; the symmetry
follows immediately, as the $H$-spectrum is a simple set.

Define a map $\Phi:\mathbb{R}^{n}\rightarrow\mathbb{R}^{n}$ by letting
$\Phi\left(  x_{1},\ldots,x_{n}\right)  :=\left(  y_{1},\ldots,y_{n}\right)
,$ where for each $k\in\left[  n\right]  ,$ we set $y_{k}:=\left(
2I_{X}\left(  k\right)  -1\right)  x_{k}$. Clearly $\Phi$ is an orthogonal
diagonal operator. Let $\lambda$ be an $H$-eigenvalue of $A$ with
$H$-eigenvector $\left(  x_{1},\ldots,x_{n}\right)  $. Since $2I_{X}\left(
k\right)  -1=-1$ if $I_{X}\left(  k\right)  =1,$ and $2I_{X}\left(  k\right)
-1=1$ if $I_{X}\left(  k\right)  =0,$ we see that
\[
\sum_{j_{2},\ldots,j_{r}}a_{k,j_{2},\ldots,j_{r}}y_{j_{2}}\cdots y_{j_{r}%
}=-\sum_{j_{2},\ldots,j_{r}}a_{k,j_{2},\ldots,j_{r}}x_{j_{2}}\cdots x_{j_{r}%
}=-\lambda y_{k}^{r-1}.
\]
Hence, $-\lambda$ is an eigenvalue with eigenvector $\left(  y_{1}%
,\ldots,y_{n}\right)  $, and so the $H$-spectrum of $A$ is geosymmetric.
\end{proof}

The conclusion of this subsection is that if we considered only real
eigenvectors, the existence of odd-colorable graphs that are not
odd-transversal would not have been made clear. This distinction shows that
complex eigenvectors and eigenvalues may play structural role in spectral
hypergraph theory.

\section{\label{CR}Concluding remarks}

In this short section we briefly address three topics in spectral hypergraph
theory: definition of adjacency matrix, relevance of algebraic spectra, and
\textquotedblleft odd-bipartiteness\textquotedblright.

\textbf{Adjacency matrix. }The traditional definition of the adjacency
(hyper)matrix (see, e.g., \cite{FrWi95} and \cite{KLM13}) represents edges by
$1$. This tradition was challenged by Cooper and Dutle in \cite{CoDu11}, who
chose to represent the edges of an $r$-graph by the value $1/\left(
r-1\right)  !,$ thereby scaling down all eigenvalues and simplifying a number
of expressions. While this novelty has been widely accepted, it has drawbacks.
For example, in the new setup the $k$th slice of the adjacency matrix is not
the adjacency matrix of the link graph of the vertex $k$. We believe that the
correct definition of the adjacency matrix is given by (\ref{AM}), and its
scaled version should be called \textquotedblleft scaled adjacency
matrix\textquotedblright.

\textbf{Relevance of the algebraic spectrum.} Given the adjacency matrix, it
is possible to build a spectral theory for hypergraphs, after adopting a
particular spectral theory of hypermatrices. The theory of the determinants
developed in \cite{HHLQ13} gives a solid ground for such endeavor, but is
hardly acceptable in full generality for hypergraphs. The main problem comes
from the fact that there are overwhelmingly many algebraic eigenvalues:
indeed, a result of Qi in \cite{Qi05} implies that an $r$-graph of order $n$
has $n\left(  r-1\right)  ^{n-1}$ eigenvalues. Certainly not all of those are
combinatorially relevant. For instance, any vector with at most $r-2$ nonzero
entries is an eigenvector to the eigenvalue $0.$

Here is an example showing that algebraic multiplicity may be combinatorially
irrelevant: Let $r\geq3$ and $G$ be an $r$-graph of order $n.$ Let $m_{0}$ is
the multiplicity of the eigenvalue $0$ and $m_{1},\ldots,m_{k}$ be the
multiplicities of the remaining eigenvalues of $G$. Now, add an isolated
vertex to $G$ and write $H$ for the resulting graph. Using (\ref{det}), we see
that $H$ has the same eigenvalues as $G$, but the multiplicity of $0$
increases to $m_{0}^{r-1}+\left(  r-1\right)  ^{n},$ and the multiplicities of
the remaining eigenvalues become $m_{1}^{r-1},\ldots,m_{k}^{r-1}.$ Given how
simple the operation of adding an isolated vertex is, it is difficult to
accept that it may affect the multiplicities of the nonzero eigenvalues.

\textbf{Odd transversals. }The study of transversals is one the oldest and
most important topics in hypergraph theory (see, Ch. 2 of \cite{Ber87}). The
concept \textquotedblleft odd transversal\textquotedblright\ seems to have
been used first by Cowan et al. in \cite{CHKS07} and later by Rautenbach and
Szigeti in \cite{RaSz12}. The connection of odd transversals to spectral
symmetry was studied first in \cite{Nik14}. In recent literature, odd
transversal graphs have been called \textquotedblleft odd bipartite
hypergraphs.\textquotedblright\ This combination of words is contradictory,
since hypergraphs cannot be bipartite. Besides, there is no need for a new term.

\bigskip

\end{document}